\theoremstyle{plain}
\newtheorem{proposition}{Proposition}
\newtheorem{lemma}{Lemma}
\newtheorem{theorem}{Theorem}
\theoremstyle{definition}
\newtheorem{definition}{Definition}
\theoremstyle{remark}
\author[C. Corona]{Connie Corona}
\address{CC: Department of Mathematics \\ California State University San Bernardino \\ San Bernardino,
  CA 92407, USA} 
\email{conniec@coyote.csusb.edu}
\author[Z. Hasan]{Zahid Hasan}
\address{ZH: Department of Mathematics \\ California State University San Bernardino \\ San Bernardino,
  CA 92407, USA} 
\email{zhasan@csusb.edu}
\author[B. Lim]{Bronson Lim}
\address{BL: Department of Mathematics \\ California State University San Bernardino \\ San Bernardino,
  CA 92407, USA} 
\email{BLim@csusb.edu}
\begin{document}

\title{Symmetric Generation of \(J_2\) on 32 Letters}

\subjclass[2010]{Primary 20D05; Secondary 20D08}
\keywords{Group Presentations, Sporadic Groups}

\begin{abstract}
  We give a computer-free proof that \(J_2\) is isomorphic to the progenitor \(2^{\star 32}: (2^{1+4}:A_5)\) factored by two relations, one of length 3 and and one of length 6, in the symmetric generators.
\end{abstract}

\maketitle

\section{Introduction}
\label{sec:intro}

\subsection{The Sporadic Simple Group \(J_2\)}

The second Janko group \(J_2\) is one of twenty six sporadic simple groups and
is of order \(604800\). Originally conjectured to exist by Janko \cite{janko} as a simple group with an involution centralized by \(2^{1+4}:A_5\). It was constructed explicitly by Hall and Wales as a
rank 3 permutation group on 100 letters, see \cite{hall-wales-69}. There have
been other constructions since then, see \cite{rahimipour-hossein-21} and
\cite{wilson-86}. 

In this paper we give an alternative construction using techniques of
symmetric generation. The automorphism group of \(J_2\colon 2\) was constructed by Curtis in \cite[Thm 5.4]{curtis} as a homomorphic image of the progenitor \(2^{\star 36}\colon (U_3(3)\colon 2)\). However, this is the first construction of \(J_2\) using symmetric generation. The progenitor used is motivated by the original conjecture of Janko.

\subsection{Symmetric Generation of Simple Groups}

All non-Abelian simple groups contain a set of generating involutions such that
the normalizer acts transitively on the generating set. These highly symmetric
generating sets yield presentations of such groups as quotients of subgroups of
\(2^{\star n}:\Sigma_n\). We can use this presentation to efficiently compute
products and inverses in these groups, see \cite[Appendix 2]{curtis}.

At the time of this writing, symmetric generating sets have yet to be discoverd
for the monster \(\mathbb{M}\), the baby monster \(\mathbb{B}\), and the
Thompson group \(\mathrm{Th}\). There are conjectures in this area, see \cite[Appendix A]{fairbairn-thesis},
but technology is not currently equipped to deal with such large groups yet.

Our presentation arises from a generating set of size 32 with normalizer
isomorphic to \(2^{1+4}:A_5\). The 32 point action is given by The coset action on the subgroup \(2^{1+4}\). The presentation will be similar to \cite{wiedorn} and \cite{L-Hasan}.

\subsection{Outline of Paper}

In \S \ref{sec:prelims} we collect the relevant preliminary materials on symmetric generation, provide a proof that \(J_2\) is symmetrically generated by a generating set of size \(32\) with normalizers isomorphic to \(2^{1+4}:A_5\), and define our construction of \(J_2\). In \S \ref{sec:dce} we use the techniques of double coset enumeration to construct a Cayley table for the action of our group. In \S \ref{sec:main} we use the Cayley table to prove that our group is isomorphic to \(J_2\).

\subsection{Acknowledgements}

The last author is grateful for the first two authors for including him in this project.

\section{Preliminaries on Symmetric Generation and \(J_2\)}
\label{sec:prelims}

Throughout the paper, we will use standard ATLAS notation for finite groups and
related concepts as described in \cite{ATLAS}.

\subsection{Progenitors}

This material is a review of the techniques described in \cite{curtis}.
Denote by \(2^{\star n}\) the \(n\)th free product of the cyclic group \(C_2\)
with itself. We set \(t_i\) to be the nontrivial element in the \(i\)th copy of
\(C_2\). Then \(2^{\star n} = \langle t_1,\ldots,t_n\mid t_i^2 = 1\text{ for }i
=1,\ldots,n\rangle\). There is a natural embedding of the symmetric group on
\(n\) letters, \(\Sigma_n\), in \(\mathrm{Aut}(2^{\star n})\) acting
transitively on the generators.

Suppose \(\mathcal{N}\subset \Sigma_n\) is a transitive subgroup, which we call
the \textit{control group}. Define \(\mathcal{P}\) to be the corresponding
split extension

\[
	\mathcal{P} = 2^{\star n}:\mathcal{N}.
\]

Then \(\mathcal{P}\) is called a \textit{progenitor} and the elements of
\(t_i\) are called \textit{symmetric generators}, \cite{curtis}. Any
element of \(\mathcal{P}\) is of the form \(\pi\omega\) where
\(\pi\in\mathcal{N}\) and \(\omega\) is a word on the symmetric generators.
There is a distinguished copy of \(\mathcal{N}\) inside \(\mathcal{P}\) by
taking \(\omega\) to be the empty word. We will also refer to this copy of
\(\mathcal{N}\) as the control group. 

\begin{definition}

    An epimorphic image of the progenitor, \(\varphi\colon \mathcal{P\to G}\)
    is called \textbf{symmetrically generated} if the following conditions are
    satisfied:

	\begin{enumerate}[(a)]

		\item the restriction to the control group is an isomorphism onto its image;

        \item the images of the symmetric generators,
            \(\varphi(t_1),\ldots,\varphi(t_n)\), are distinct involutions;
            
        \item the images of the symmetric geneators generate \(\mathcal{G}\),
            i.e. \(\mathcal{G} = \langle \varphi(t_i)\rangle_{i=1,\ldots,n}\).

	\end{enumerate}

\end{definition}

If \(\mathcal{G}\) is symmetrically generated we abuse notation and write
\(t_i\) for the image of \(t_i\) in \(\mathcal{G}\). It will be clear from
context where \(t_i\) lies.

Symmetrically generated groups arise, in practice, by quotienting a progenitor
by the normal closure of a finite set of elements. That is, if
\(\pi_1,\ldots,\pi_m\in\mathcal{N}\) and \(\omega_1,\ldots,\omega_m\) are words
in the symmetric generators, we define

\[
	\mathcal{G} = \frac{\mathcal{P}}{\pi_1\omega_1,\ldots,\pi_m\omega_m}
\]
to mean the quotient of \(\mathcal{P}\) by the normal closure of the group
generated by \(\pi_1\omega_1,\ldots,\pi_m\omega_m\) in \(\mathcal{P}\). The
elements \(\pi_1\omega_1,\ldots,\pi_m\omega_m\) are refered to as relations. 

A general set of relations need not determine a symmetrically generated group or
even a finite one. An interesting question is to determine which finite groups
are symmetrically generated. As a corollary of the Feit-Thompson odd order
theorem, any finite non-Abelian simple group is symmetrically generated
\cite[Lemma 3.6]{curtis}. In particular, the sporadic group \(J_2\) is
symmetrically generated.

\subsection{Symmetric Generation of \(J_2\)}

Our main theorem is motivated by the following proposition. \(J_2\) has a maximal subgroup \(2^{1+4}:A_5\). The where we look inside
the group structure of \(J_2\) for a symmetric generating set consisting of
involutions with control group \(2^{1+4}:A_5\).

\begin{proposition}

    There exists a symmetric generating set \(T = \{t_1,\ldots,t_{32}\}\) for
    \(J_2\) with control group \(2^{1+4}:A_5\).

    \label{prop:sym-gen-exist}
\end{proposition}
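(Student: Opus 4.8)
The plan is to realize the control group inside $J_2$ as the centralizer of a $2A$-involution and then to produce the $32$ symmetric generators as a single conjugacy orbit. Concretely, I would fix an involution $z \in J_2$ of class $2A$ and set $N := C_{J_2}(z)$; by the known conjugacy-class and maximal-subgroup data for $J_2$ this is a maximal subgroup of order $1920$ isomorphic to the control group $2^5{:}A_5$ (written $2^{1+4}_-{:}A_5$ in ATLAS notation). Any subgroup $A_5 \le N$ of order $60$ meets the normal $2$-subgroup trivially, since $A_5$ is simple, and so is automatically a complement; the degree-$32$ action demanded by the progenitor is then the action of $N$ on the cosets $N/A_5$, on which the normal subgroup of order $32$ acts regularly. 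This is exactly the stated ``coset action on the subgroup $2^5$.''

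Next I would locate a single involution $t$ with $C_N(t) \cong A_5$ of order $60$ and $t \notin N$, and set $T := t^{N} = \{\, t^{n} : n \in N \,\}$. Then $|T| = [N : C_N(t)] = 1920/60 = 32$, so $T$ consists of $32$ distinct involutions permuted transitively by $N$ under conjugation, with point stabilizer the complement $C_N(t) \cong A_5$; this reproduces the prescribed degree-$32$ action and gives condition (b) together with transitivity. To find such a $t$ I would work inside the maximal subgroup $A_5 \times D_{10}$ of order $600$: assuming $C_{J_2}(D_{10})$ equals the direct factor $A_5$, one takes $z$ and $t$ to be two distinct reflections of the $D_{10}$, so that $\langle z,t\rangle = D_{10}$ and hence $C_N(t) = C_{J_2}(z) \cap C_{J_2}(t) = C_{J_2}(\langle z,t\rangle) = C_{J_2}(D_{10}) = A_5$. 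Distinct reflections of $D_{10}$ do not commute (their product has order $5$), so $t \notin C_{J_2}(z) = N$, as required.

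Generation (condition (c)) is then formal. Because $N$ is maximal in $J_2$ and $t \notin N$, we have $\langle N, t\rangle = J_2$; and $\langle T\rangle = \langle t^{N}\rangle$ is normalized by $N$ (which permutes $T$) and by $t$ (which lies in $T$), hence by all of $\langle N, t\rangle = J_2$. As $J_2$ is simple and $\langle T\rangle \ne 1$, this forces $\langle T\rangle = J_2$. Condition (a) holds because $N$ is literally a subgroup of $J_2$ isomorphic to $2^5{:}A_5$. Assembling these, the natural map $2^{\star 32}{:}(2^5{:}A_5) \to J_2$ carrying the control group isomorphically onto $N$ and the $i$-th symmetric generator to $t_i \in T$ is a well-defined epimorphism exhibiting $J_2$ as symmetrically generated.

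The main obstacle is the middle step: pinning down an involution $t$ whose centralizer in $N$ is exactly an $A_5$ (order $60$, and no larger), along with the supporting fusion and embedding facts — that the relevant $D_{10}$-reflections lie in class $2A$ so that $C_{J_2}(z) \cong 2^5{:}A_5$, and that $C_{J_2}(D_{10})$ is precisely the $A_5$ direct factor. I would confirm these either by reading the conjugacy-class and maximal-subgroup information off the ATLAS or by a short machine computation in a concrete permutation or matrix representation of $J_2$; everything else (transitivity, distinctness of the $t_i$, and generation) then follows from the maximality of $N$ and the simplicity of $J_2$.
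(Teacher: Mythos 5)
Your proposal is correct and follows essentially the same route as the paper: take the maximal subgroup $\mathcal{N}\cong 2^5{:}A_5$ (the involution centralizer), pick an involution $t$ in $C_{J_2}(A_5)\cong D_{10}$ other than the central one so that $C_{\mathcal{N}}(t)=A_5$ gives $32$ conjugates, and conclude generation from maximality of $\mathcal{N}$ together with simplicity of $J_2$. The only difference is that you spell out the justification that $C_{\mathcal{N}}(t)$ is exactly $A_5$ (via the $A_5\times D_{10}$ subgroup), which the paper simply asserts.
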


\begin{proof}

    Consider the maximal subgroup \(\mathcal{N} = 2^{1+4}:A_5\) inside of \(J_2\).
    Take an element \(t\) of order \(2\) in the centralizer of \(A_5\) which is
    not the center of \(2^{1+4}:A_5\). Then the number of conjugates of \(t\) under
    \(\mathcal{N}\) are \(|\mathcal{N}:\mathcal{N}^t| = |\mathcal{N}:A_5| =
    32\). We can label these conjugates as \(t_1,\ldots,t_{32}\) so that the
    generators of \(\mathcal{N}\) act via

    \begin{align*}
        x=&(1, 2)(3, 5, 7, 11, 17, 4, 6, 9, 14, 22)(8, 13, 20, 29, 23, 10, 16, 25, 30,18) \\
        &(12, 19, 28, 32, 26, 15, 24, 27, 31, 21) \\
        y=&(1, 3)(2, 4)(5, 8)(6, 10)(7, 12, 17, 27, 14, 23) \\
        &(9, 15, 22, 28, 11, 18)(13, 21, 24, 30, 32, 20)(16, 26, 19, 29, 31, 25).
    \end{align*}
	
    We now see that the \(t_i\) generate \(J_2\). Define \(H = \langle
    t_1,\ldots,t_{32}\rangle\). Since \(\mathcal{N}\) permutes the \(t_i\) we
    know \(\mathcal{N}\leq N_{J_2}(H)\). Since \(t\notin \mathcal{N}\) we have
    that \(\mathcal{N}\) is a proper subgroup of the normalizer. This implies
    that \(H = J_2\).
    
\end{proof}

The centralizer of \(A_5\) in \(J_2\) is \(D_{10}\) so we also know there is a symmetric generating set where the \(t_i\) have order \(5\). See \cite[Chapter 6]{curtis} for more general progenitors than those generated by involutions, such as those generated by cyclic groups of other orders or monomial progenitors.

\subsection{A Progenitor Corresponding to \(2^{1+4}:A_5\)}

Let \(x,y\) be defined as in Proposition \ref{prop:sym-gen-exist}. We take the action of \(2^{1+4}:A_5\) on the set of cosets of \(A_5\) then \(2^{1+4}:A_5\cong\langle x,y\rangle\). We define the progenitor \(\mathcal{P}\) to be

\[
	\mathcal{P} = 2^{\star 32}:(2^{1+4}:A_5).
\]

Now set \(\pi = xy^{-2}xy\) and \(\tau = x^5y^3\). Explicilty, we have
\begin{align*}
	\pi = &(1, 3, 12, 16, 7)(2, 4, 15, 13, 9)(5, 28, 14, 23, 26) \\
	&(6, 27, 11, 18, 21)(8, 19, 30, 20, 22)(10, 24, 29, 25, 17) \\
	\tau = &(1, 4)(2, 3)(5, 10)(6, 8)(7, 28)(9, 27)(11, 12)(13, 29)\\
	&(14, 15)(16, 30)(17, 18)(19, 20)(21,31)(22, 23)(24, 25)(26, 32)
\end{align*}

Our main object of study is the homomorphic image:
\begin{align*}
	\mathcal{G} = &\frac{\mathcal{P}}{(\tau t_1)^3, (\pi t_2)^6} \\
	= &\langle x,y,t|x^{10}, y^6, (xy^{-2}x)^2, (xy^2x^2)^2, (y^{-1}x^{-1})^5, (xy^2x^{-1}y^{-1})^2, x^{-1}y^{-1}x^5yx^{-4}, \\
	& yx^{-2}y^{-1}x^3yxy^3x^{-1},
t^2, (t,x^2), (t,y^2),
(xy^{-2}xyt^x)^6, (xt)^5\rangle.
\end{align*}

\begin{theorem}\label{thm:main-result}

	The group \(\mathcal{G}\) is isomorphic to the Janko group \(J_2\).

\end{theorem}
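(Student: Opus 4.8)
The plan is to sandwich the order of \(\mathcal{G}\) between two copies of \(|J_2| = 604800\) and then upgrade the resulting surjection to an isomorphism. Writing \(\mathcal{N} = 2^5:A_5\) for the control subgroup, I note that \(|\mathcal{N}| = 32 \cdot 60 = 1920\) and \(604800 / 1920 = 315\), so the concrete target is to show that \(\mathcal{G}\) is covered by exactly \(315\) cosets of \(\mathcal{N}\).

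For the lower bound I would invoke Proposition \ref{prop:sym-gen-exist}: it exhibits inside \(J_2\) a set of \(32\) involutions permuted by \(\mathcal{N} = 2^5:A_5\) exactly as prescribed by \(x\) and \(y\), and generating \(J_2\). This is precisely a symmetric generation, i.e.\ a surjection \(\mathcal{P} \twoheadrightarrow J_2\) sending \(t_i\) to the \(i\)th involution and restricting to an isomorphism on \(\mathcal{N}\). It then remains to check that the two defining relations of \(\mathcal{G}\) hold among these concrete involutions, that is, \((\tau t_1)^3 = 1\) and \((\pi t_2)^6 = 1\) once \(\tau = x^5 y^3\) and \(\pi = xy^{-2}xy\) are read as the stated elements of \(\mathcal{N} \leq J_2\). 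Everything here is explicit, so this is a finite verification, and the relations were presumably chosen so that it succeeds. Consequently the surjection factors through the normal closure of the relations, yielding \(\mathcal{G} \twoheadrightarrow J_2\) and hence \(|\mathcal{G}| \geq 604800\).

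For the upper bound I would run a double coset enumeration of \(\mathcal{G}\) over \(\mathcal{N}\), the content of \S\ref{sec:dce}. Since \(\mathcal{N}\) permutes the symmetric generators, every element of \(\mathcal{G}\) lies in a double coset \(\mathcal{N} w \mathcal{N}\) with \(w\) a word in the \(t_i\); the enumeration organizes these, records for each double coset its coset stabilizer in \(\mathcal{N}\) and thus its number of single cosets \(\mathcal{N} w\), and tracks where right multiplication by each \(t_i\) sends each coset. The two relations drive the collapse: rewriting \((\tau t_1)^3 = 1\) expresses a product of three symmetric generators as the element \(\tau \in \mathcal{N}\), so it and its \(\mathcal{N}\)-conjugates pull length-two and longer words back into shorter double cosets, while \((\pi t_2)^6 = 1\) closes off the remaining length-six branches. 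I would keep adjoining double cosets until the graph is closed under right multiplication by the \(t_i\); summing the single-coset counts should give \(315\), so that \(\mathcal{G}\) is covered by at most \(315\) cosets of the image of \(\mathcal{N}\). As this image has order at most \(|\mathcal{N}| = 1920\), we obtain \(|\mathcal{G}| \leq 315 \cdot 1920 = 604800\).

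Combining the two bounds forces \(|\mathcal{G}| = 604800\) and collapses the surjection \(\mathcal{G} \twoheadrightarrow J_2\) into an isomorphism. I expect the double coset enumeration to be the main obstacle: the delicate points are computing each coset stabilizer correctly (these enlarge whenever a relation produces a nontrivial loop, and underestimating them would inflate the coset count past \(315\)) and certifying that the double-coset graph is genuinely closed, so that \(315\) is an honest upper bound rather than merely the number of cosets found so far. By comparison, verifying the relations inside \(J_2\) and bookkeeping the \(\mathcal{N}\)-action on the \(t_i\) are routine.
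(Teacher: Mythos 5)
Your proposal is correct in outline and matches the paper on both bounds: the lower bound $|\mathcal{G}|\geq 604800$ via Proposition \ref{prop:sym-gen-exist} (which does require, as you note, the finite check that $(\tau t_1)^3$ and $(\pi t_2)^6$ vanish on the concrete involutions in $J_2$ --- the paper leaves this implicit too), and the upper bound via the double coset enumeration of \S\ref{sec:dce}, whose single-coset counts $1+32+32+10+160+80=315$ give $|\mathcal{G}|\leq 315\cdot 1920 = 604800$. Where you genuinely diverge is the endgame. You finish by observing that a surjection between finite groups of equal order is an isomorphism, so the map $\mathcal{G}\twoheadrightarrow J_2$ collapses to an isomorphism once the orders match. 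The paper instead never upgrades the surjection directly: it proves $\mathcal{G}$ is perfect (Lemma \ref{lem:perfect}), that the conjugates of the abelian normal subgroup $2^5$ of the point stabilizer generate $\mathcal{G}$ (Lemma \ref{lem:generates}), and that the coset action is primitive, then invokes Iwasawa's criterion to conclude $\mathcal{G}$ is simple, and finally cites the uniqueness of the simple group of order $604800$ to identify it with $J_2$. Your route is shorter and uses strictly less machinery --- no Iwasawa, no uniqueness theorem --- and is perfectly valid given that the surjection has been exhibited. What the paper's route buys is an intrinsic proof of simplicity that does not lean on the classification-style uniqueness input for the identification step alone, and this intrinsic simplicity is reused later in the section connecting $\mathcal{G}$ to Janko's original conjecture. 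Either way, the real mathematical content is the double coset enumeration and the relation-checking, which you have correctly located as the load-bearing parts.
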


The proof of this theorem will occupy the next two sections. We also have that \(J_2\) has a symmetric generating set of size 32 consisting of symmetric generators of order \(5\). Thus it is also a true image of the permutation progenitor \(5^{\star 32}\colon (2^{1+4}: A_5)\). In particular, we have tabulated:

\begin{theorem}\label{thm:additional-result}
	The Janko group \(J_2\) is isomorphic to
	\begin{align*}
		\mathcal{G}' = &\frac{5^{*32}:(2^{1+4}:A_5)}{((xy)^2t^{x^{-2}y^{-1}x^2yx})^7, (y^3t^{xy^{-2}x^{-2}y^{-1}})^3} \\
		= &\langle x,y,t|x^{10} ,y^6 , (x  y^{-2 } x)^2 , (x  y^2  x^2)^2 ,(y^{-1} x^{-1})^5 , (x  y^2  x^{-1}  y^{-1})^2 , x^{-1} y^{-1}  x^5  y  x^{-4} , \\
		&y  x^{-2} y^{-1} x^3  y  x  y^3  x^{-1}, t^5, (t,(x^2)),(t,(y^2)), ((xy)^2t^{x^{-2}y^{-1}x^2yx})^7, (y^3t^{xy^{-2}x^{-2}y^{-1}})^3\rangle.
	\end{align*}
\end{theorem}

Theorems \ref{thm:main-result} and \ref{thm:additional-result} are readily verified using MAGMA, \cite{MAGMA}. Additionally, MAGMA was used for verification of several relations.

\section{Double Coset Enumeration over \(2^{1+4}:A_5\)}\label{sec:dce}
\subsection{Double Coset Enumeration}

We use double coset enumeration techniques to establish a primitive action of
\(J_2\) on the set of left cosets. We then appeal to Iwasawa's simplicity
criterion. The remainder of this section is devoted to the former, i.e.
computing the Cayley Graph for the action of \(\mathcal{G}\) on single cosets
of the form \(\mathcal{N}\omega\) where \(\mathcal{N} = 2^{1+4}:A_5\) is the
control group.

If \(\omega_1,\omega_2\) are words in the symmetric generators, we will write \(\omega_1\sim\omega_2\) whenever \(\mathcal{N}\omega_1=\mathcal{N}\omega_2\).

\subsection{Relations}

We prove necessary relations to perform the double coset enumeration. We have the following immediate relations
\begin{equation}\label{reln:given-3}
	\tau = t_1t_4t_1
\end{equation}
and
\begin{equation}\label{reln:given-6}
	\pi^4 = t_2t_9t_{13}t_{15}t_4t_2 
\end{equation}
given by expanding the relations \( (\pi t_2)^6 = (\tau t_1)^3 = e\).

We devote the rest of this subsection to the proof of several lemmas that are necessary to complete the double coset enumeration process.

\begin{lemma}
	We have the relation:
	\begin{equation}\label{reln:182329-word}
	t_1t_8t_{23}t_{29} = x^4yxyx^{-1}t_{18}t_{12}t_{20}t_{30}
	\end{equation}
\end{lemma}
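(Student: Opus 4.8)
The plan is to prove the stated identity by reducing it to the claim that a single length-eight word in the symmetric generators lies in the control group \(\mathcal{N} = 2^5{:}A_5\) and equals the specific element \(x^4yxyx^{-1}\). Since each \(t_i\) is an involution, relation \eqref{reln:182329-word} is equivalent, upon right-multiplying by \((t_{18}t_{12}t_{20}t_{30})^{-1} = t_{30}t_{20}t_{12}t_{18}\), to
\[
t_1t_8t_{23}t_{29}\,t_{30}t_{20}t_{12}t_{18} = x^4yxyx^{-1}.
\]
In coset language this asserts the equality \(\mathcal{N}\,t_1t_8t_{23}t_{29} = \mathcal{N}\,t_{18}t_{12}t_{20}t_{30}\) together with the explicit representative witnessing it.

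The two tools available are the families of relations obtained from \eqref{reln:given-3} and \eqref{reln:given-6} by conjugation, together with the bookkeeping rule \(n t_i = t_{n(i)}\,n\) for \(n\in\mathcal{N}\). Conjugating \(\tau = t_1t_4t_1\) by any \(n\in\mathcal{N}\) yields \(t_{n(1)}t_{n(4)}t_{n(1)} = n\tau n^{-1}\in\mathcal{N}\); since \((t_at_b)^3 = e\) for every pair \((a,b)\) in the \(\mathcal{N}\)-orbit of \((1,4)\), each such pair gives a rewriting rule \(t_at_b = (n\tau n^{-1})\,t_a\) that collapses a two-letter block to a group element times a single generator. The conjugates of \(\pi^4 = t_2t_9t_{13}t_{15}t_4t_2\) from \eqref{reln:given-6} supply the length-six relations needed to rewrite four-letter blocks.

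The strategy is to apply these rules in sequence to the word \(t_1t_8t_{23}t_{29}\): locate a substring matching a conjugate of the pattern in \eqref{reln:given-6} (possibly combined with an application of \eqref{reln:given-3}), replace it by the corresponding element of \(\mathcal{N}\) times a shorter tail, push the accumulated \(\mathcal{N}\)-element to the left via \(n t_i = t_{n(i)}\,n\), which relabels the remaining generators, and iterate until the surviving word on the symmetric generators is exactly \(t_{18}t_{12}t_{20}t_{30}\). Multiplying together the successive control-group factors then produces the coefficient, which one checks equals \(x^4yxyx^{-1}\).

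The main obstacle is twofold. First, there is no canonical shortest rewriting: discovering which conjugates of \eqref{reln:given-3} and \eqref{reln:given-6} to apply, and in what order, is a search problem, guided in practice by the double coset enumeration of \S\ref{sec:dce} (or confirmed mechanically in a computer algebra system), which is what reveals that the two length-four words lie in a common coset at all. Second, once the chain of moves is fixed, identifying the resulting element of \(\mathcal{N}\) reduces to an explicit but error-prone permutation computation in \(2^5{:}A_5\); the relabelling caused by each conjugation must be tracked carefully so that the final permutation agrees with \(x^4yxyx^{-1}\) on all \(32\) points. Everything after the search is routine verification.
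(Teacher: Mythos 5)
Your reduction of the identity to the assertion that the length-eight word $t_1t_8t_{23}t_{29}t_{30}t_{20}t_{12}t_{18}$ equals $x^4yxyx^{-1}$ in $\mathcal{N}$ is sound, and your description of the rewriting machinery --- conjugates of \eqref{reln:given-3} collapsing two-letter blocks, conjugates of \eqref{reln:given-6} handling longer substrings, with the bookkeeping rule $nt_i = t_{n(i)}n$ pushing accumulated coefficients leftward --- is exactly the method the paper uses. So you have correctly identified the approach.

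The problem is that you have not actually carried it out. The entire mathematical content of this lemma \emph{is} the explicit chain of substitutions: the paper's proof is a sixteen-step derivation in which each line names the specific conjugate of \eqref{reln:given-3} or \eqref{reln:given-6} being applied and records the resulting relabelling and coefficient, terminating at $x^4yxyx^{-1}t_{18}t_{12}t_{20}t_{30}$. Your proposal defers precisely this to ``a search problem, guided in practice by the double coset enumeration'' and calls what remains ``routine verification,'' but neither the search result nor the verification appears. Deferring to the double coset enumeration is also circular in context: this lemma is one of the inputs used to establish relation \eqref{reln:182329}, which in turn drives much of the enumeration in \S\ref{sec:dce}, so the coset identification cannot be assumed as already known. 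As written, the proposal is a correct plan of attack rather than a proof; to complete it you must exhibit a concrete sequence of conjugated relations (or an equivalent explicit computation in the progenitor) that transforms $t_1t_8t_{23}t_{29}$ into $x^4yxyx^{-1}t_{18}t_{12}t_{20}t_{30}$.
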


\begin{proof}
We compute:
\begin{align*}
	t_1t_8t_{23}t_{29} &= yx^2y^{-2}x^{-2}t_6t_9t_{23}t_{13}t_6(t_1t_3)t_{29} & (\ref{reln:given-6}) \\
	&= (yxy^{-1})^4t_{12}t_2t_{30}t_{11}t_{12}t_1t_7t_{21}(t_{10}t_{29}) & (\ref{reln:given-6}) \\
	&= x^2yx^2y^{-1}x^{-1}y^{-1}t_{19}t_7t_8t_{25}t_{19}(t_9t_2t_{22})t_{10} & (\ref{reln:given-3}) \\
	&= xyx^4y^{-1}t_{15}t_{30}t_{32}t_{21}t_{15}t_9t_{27}(t_{29}t_{10}) & (\ref{reln:given-6}) \\
	&= yxyx^{-1}y^{-1}xyx^{-1}t_{24}t_8t_{23}t_{22}t_{24}t_1t_3t_{29} & (\ref{reln:given-3}) \\
	&= yxyx^{-1}y^{-1}xyx^{-1}t_{24}t_8, (t_{23}t_{22}t_{24})t_1t_3t_{29} & (\ref{reln:given-3}) \\
	&= x^4yx^{-1}y t_{28}t_8(t_{23}t_{31})t_{28}t_{1}t_3t_{29} & (\ref{reln:given-6}) \\
	&= yxy^{-2}x t_7t_{22}t_{23}t_{25} (t_7t_1t_3)t_{29} & (\ref{reln:given-6}) \\
	&= (yx yx^{-1}y^{-1})^2 t_1t_{28}t_{30} t_{17}t_7 t_{21}(t_{10}t_{29}) & (\ref{reln:given-6}) \\
	&= y^2x^{-1}y^{-1}x^2t_9(t_4t_8 t_{26})t_2t_{22}t_{10} & (\ref{reln:given-3}) \\
	&= xy^2x^{-1}y^{-2}x t_2(t_4t_2 t_9 )t_2t_{22}t_{10} & (\ref{reln:given-6}) \\	
	&= y^2xy^3x^{-1}yt_{32} t_4t_2(t_{11}t_2t_{22})t_{10} & (\ref{reln:given-6}) \\
	&= yx^{-1}y^2xt_{17} t_7 t_{16} t_{11} (t_2 t_5t_{10}) & (\ref{reln:given-6}) \\
	&= (yxy^{-2})^2t_{32}t_{24}t_{20}t_2t_2t_{11}t_{30} & (\ref{reln:given-6}) \\
	&= x^4yxyx^{-1} t_{18}t_{12}(t_{20}t_{11})t_{30} & (\ref{reln:given-3}) \\
	&= x^4yxyx^{-1}t_{18}t_{12}t_{20}t_{30}.
\end{align*}
\end{proof}

\begin{lemma}
	We have the relation
	\begin{equation}\label{reln:182329}
		t_1t_8t_{23}t_{29} = y^{-1}x^{-3}
	\end{equation}
\end{lemma}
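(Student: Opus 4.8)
The plan is to bootstrap from the word relation \eqref{reln:182329-word}, which already rewrites \(t_1t_8t_{23}t_{29}\) as the control element \(x^4yxyx^{-1}\) followed by the residual four-letter word \(t_{18}t_{12}t_{20}t_{30}\). It therefore suffices to show that this residual word collapses into the control group \(\mathcal{N}=2^5:A_5\) and that, after absorbing the prefix \(x^4yxyx^{-1}\), the resulting element is exactly \(y^{-1}x^{-3}\).

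First I would continue the rewriting in the same spirit as the proof of \eqref{reln:182329-word}, applying \(\mathcal{N}\)-conjugates of the two defining relations \eqref{reln:given-3} and \eqref{reln:given-6}. Concretely, I would locate an adjacent pair or triple of indices inside \((18,12,20,30)\) that is the image, under some control element \(n\in\mathcal{N}\), of the indices \(\{1,4\}\) appearing in \(\tau=t_1t_4t_1\), or of the sextuple appearing in \(\pi^4\). Each application of a conjugate \(n\tau n^{-1}=t_{n(1)}t_{n(4)}t_{n(1)}\) replaces two symmetric generators by a single one times a control element, and I would repeat this, carrying the accumulated control factors to the left by the rule \(g\,t_i = t_{g(i)}\,g\), until no symmetric generators remain. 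What is then left is a pure word in \(x\) and \(y\), which I would evaluate using the explicit permutations and the isomorphism \(\mathcal{N}\cong 2^5:A_5\) to confirm it equals \(y^{-1}x^{-3}\). This bookkeeping must be carried out in \(\mathcal{N}\) itself, not merely at the level of cosets, since the statement is an exact identity in \(\mathcal{G}\).

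The main obstacle is the search step: recognizing which substrings of the residual word are genuine \(\mathcal{N}\)-translates of the collapsing patterns in \eqref{reln:given-3} or \eqref{reln:given-6}, and keeping the conjugating control elements straight as they are commuted past the remaining generators. One tempting shortcut is to observe that the two index tuples \((1,8,23,29)\) and \((18,12,20,30)\) are related by a control element, so that the residual word is a control-conjugate of the original; feeding this back into \eqref{reln:182329-word} gives an equation of the form \(W = g\,\sigma W\sigma^{-1}\) for \(W=t_1t_8t_{23}t_{29}\). I expect this to serve only as a consistency check, collapsing to a tautology rather than pinning down \(W\); the exact value can be obtained only by carrying the reduction all the way down until the symmetric generators are fully eliminated.
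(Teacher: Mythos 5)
Your overall strategy coincides with the paper's: starting from \eqref{reln:182329-word}, show that the residual word \(t_{18}t_{12}t_{20}t_{30}\) collapses to an element of \(\mathcal{N}\), then multiply the two control elements. Where you diverge is in your assessment of the self-referential structure. You set aside, as a mere consistency check, the observation that the residual tuple is a control-translate of (the reverse of) the original; the paper makes precisely this move its entry point. Inverting \eqref{reln:182329-word} and pushing the prefix \(x^4yxyx^{-1}\) through the residual word turns it into \(t_8t_{14}t_{23}t_1\), whose trailing \(t_1\) cancels against the trailing \(t_1\) of \(t_{29}t_{23}t_8t_1\), leaving the length-three identity \(x^4yxyx^{-1}\,t_{29}t_{23}t_8 = t_8t_{14}t_{23}\). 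You are right that this rearrangement by itself carries no information beyond \eqref{reln:182329-word}; but it is not a dead end, because conjugating it by \(y^2xy^{-1}xy^{-1}x\) and then applying further conjugates of \eqref{reln:given-3} and \eqref{reln:given-6} reduces everything to \(yxy^{-1}x^{-2}y^{-1}\,t_{30}t_{20}t_{12}t_{18}=e\), which is exactly the collapse you need; substituting back gives \(y^{-1}x^{-3}\). Your alternative---a direct reduction of \(t_{18}t_{12}t_{20}t_{30}\) by locating translates of the defining patterns---is sound in principle, but note that a bare length-four word contains no visible instance of the length-six pattern from \eqref{reln:given-6}; as in the paper's other computations you would have to insert pairs \(t_it_i\) to manufacture them, so the search you flag as the main obstacle is harder than your description suggests. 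As written, your proposal is a correct plan rather than a proof: for a lemma of this kind the entire content is the explicit chain of reductions, and none is exhibited.
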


\begin{proof}
We compute:
\begin{align*}
	t_1t_8t_{23}t_{29} &= x^4yxyx^{-1}t_{18}t_{12}t_{20}t_{30} & (\ref{reln:182329-word}) \\
	t_{29}t_{23}t_8t_1 &= xy^{-1}x^{-1}y^{-1}xt_8t_{14}t_{23}t_1 & (\text{inverting})\\
	t_{29} t_{23} t_8 t_1 &= xyxy^2xyxt_8t_{14}t_{23}t_1\\
	t_{29} t_{23} t_8 &= xyxy^2xyx t_8 t_{14}t_{23}\\
	x^4y xyx^{-1}  t_{29}t_{23}t_8 &= t_8 t_{14} t_{23}. \\
\end{align*}
Now we conjugate by \(y^2xy^{-1}xy^{-1}x\):
\begin{align*}
	x^{-3}y^{-1}x^{-1} yx^{-1} t_4 t_{14} t_{28}  &= t_{28} t_{18} t_{14} \\
	x^{-3}y^{-1}x^{-1} yx^{-1} t_4 (t_{14} t_{28}t_{14})t_{18}t_{28}  &= e & (\ref{reln:182329-word}) \\
	x^{-2}y^{-1} x^{-1}y^{-1}x^2t_{25}t_{32}t_{19}(t_{24}t_{18}t_{28}) &= e & (\ref{reln:given-6}) \\
	yxy^{-1}x^{-2}y^{-1} t_{30}  t_{20}  t_{28}  t_{28} t_{12} t_{18} &= e & (\ref{reln:given-6}) \\
	yxy^{-1}x^{-2}y^{-1} t_{30} t_{20}t_{12}t_{18} &= e. \\
\end{align*}
This yields
\begin{align*}
	t_1t_8t_{23}t_{29} &= x^4yxyx^{-1}t_{18}t_{12}t_{20}t_{30} & (\ref{reln:182329-word}) \\
	&= x^4yxyx^{-1}y x y^{-1}x^{-2}y^{-1} \\
	&= y^{-1}x^{-3}.
\end{align*}
\end{proof}

\begin{lemma}
	We have the relation
	\begin{equation}\label{reln:231823}
		t_{23}t_{18}t_{23} = x^2y^{-1}x^{-1}y^{-2}x^{-1}t_{23}t_3t_{27}.
	\end{equation}
\end{lemma}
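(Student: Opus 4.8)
The plan is to establish this identity by the same iterative reduction used in the two preceding lemmas, rewriting the left-hand word $t_{23}t_{18}t_{23}$ into a control-group element times a short word in the symmetric generators. The engine of every step is the fact that $\mathcal{N}$ permutes the symmetric generators, so that $g^{-1}t_ig = t_{ig}$ for each $g\in\mathcal{N}$; conjugating the two fundamental relations \eqref{reln:given-3} and \eqref{reln:given-6} therefore produces a whole family of substitution rules $t_at_bt_a = \tau^g$ (for $(a,b)$ in the $\mathcal{N}$-orbit of $(1,4)$) and $t_at_bt_ct_dt_et_a = (\pi^4)^g$ (for $(a,b,c,d,e)$ ranging over the corresponding orbit of index-quintuples). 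I would also feed in the already-proved relations \eqref{reln:182329-word} and \eqref{reln:182329}, together with their $\mathcal{N}$-conjugates, as additional substitution rules, exactly as \eqref{reln:182329-word} was used inside the proof of \eqref{reln:182329}.

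Concretely, I would first observe that the asserted identity is equivalent, after multiplying on the right by $(t_{23}t_3t_{27})^{-1} = t_{27}t_3t_{23}$ and using $t_i^2 = 1$, to the purely coset-trivial statement
\[
	t_{23}t_{18}t_{23}t_{27}t_3t_{23} = x^2y^{-1}x^{-1}y^{-2}x^{-1},
\]
a six-letter word equal to a control element. The repetition pattern of this sextuple (the index $23$ occurs at positions $1,3,6$) does not match that of the sextuple in \eqref{reln:given-6} (where the repeated index occurs only at positions $1,6$), and since conjugation relabels indices by a bijection it preserves this pattern; hence no single conjugate of \eqref{reln:given-6} can finish the job and a telescoping chain is needed. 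At each step I would locate a subword that is a conjugate of the right-hand side of \eqref{reln:given-3} or \eqref{reln:given-6}, replace it by the corresponding conjugate of $\tau$ or $\pi^4$, and sweep that control element to the left past the remaining generators (which relabels their indices by the permutation action). The chain terminates once two adjacent symmetric generators coincide and cancel via $t_jt_j = 1$, dropping the word back to the target length — the same $t_2t_2$ and $t_{28}t_{28}$ collapses that drive the reductions in the previous two proofs.

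The main obstacle is purely combinatorial: selecting, at each stage, the conjugating element $g\in\mathcal{N}$ that both matches a fundamental relation and produces a cancellation, so that the length strictly decreases and the accumulated control-group prefix simplifies to precisely $x^2y^{-1}x^{-1}y^{-2}x^{-1}$. This is exactly the bookkeeping built into the double-coset enumeration, and in practice the correct sequence of conjugators is located by tracking the cosets $\mathcal{N}\omega$ by machine; once the sequence is known, each individual rewrite reduces to the mechanical verification that two permutations of $32$ points agree. I therefore expect the write-up to take the form of a displayed \texttt{align*} chain of equalities, each annotated by the relation invoked, starting from $t_{23}t_{18}t_{23}$ and terminating in $x^2y^{-1}x^{-1}y^{-2}x^{-1}t_{23}t_3t_{27}$.
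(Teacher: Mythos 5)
Your proposal describes the right general machinery --- conjugates of \eqref{reln:given-3}, \eqref{reln:given-6}, \eqref{reln:182329-word} and \eqref{reln:182329} used as substitution rules, with control elements swept to the left --- and your observation that the repetition pattern of $t_{23}t_{18}t_{23}t_{27}t_3t_{23}$ (repeated index at positions $1,3,6$) cannot match a single conjugate of \eqref{reln:given-6} (repeated index at positions $1,6$ only) is a correct and worthwhile sanity check. But what you have written is a plan, not a proof. For a lemma of this kind the entire content \emph{is} the explicit chain of rewrites: which conjugate of which relation is applied at each step, what the intermediate words are, and why the accumulated control prefix comes out to exactly $x^2y^{-1}x^{-1}y^{-2}x^{-1}$. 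You defer all of this to ``the correct sequence of conjugators is located by machine,'' which leaves the existence of a terminating chain with that precise prefix --- the only nontrivial claim in the lemma --- unverified. As it stands, nothing in your argument distinguishes the true identity from a false one with a different control element on the right-hand side.

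For comparison, the paper does not run a single telescoping chain from $t_{23}t_{18}t_{23}$ down to the target. It instead derives two auxiliary length-3 relations, $t_1t_2t_8 = x^{-1}y^3xy\,t_1t_6$ and $t_1t_8t_6 = y^2t_{10}t_3$, each by a short reduction using \eqref{reln:182329} and \eqref{reln:given-3}; it then conjugates the first by $xy^{-2}x^{-1}y^{-1}x^{-1}y$ and the second by $x^{-1}y^{-1}x^{-1}y^3$ so that both right-hand sides land on the \emph{same} two-letter word $t_{28}t_{31}$, namely $t_{28}t_{27}t_{23} = x^2yxy^2t_{28}t_{31}$ and $t_{23}t_{27}t_3 = yx^{-4}y\,t_{28}t_{31}$. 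Eliminating $t_{28}t_{31}$ links the two three-letter words, giving $t_{28}t_{27}t_{23} = xy^2xyx^{-2}t_{23}t_{27}t_3$, which is then converted into the stated form. If you want to complete your argument, either reproduce a chain of that kind explicitly or adopt this ``meet in the middle'' structure; in both cases the conjugating elements and intermediate words must appear on the page.
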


\begin{proof}
We first show that \(t_{28}t_{27}t_{23} = xy^2xyx^{-2}t_{23}t_{27}t_3\):
\begin{align*}
t_1(t_2t_8) &= xy^2xy^{-1}x^{-1}(t_{22}t_{25})t_{16} & (\ref{reln:182329}) \\
&= xyx^2yxyt_7(t_6t_{16}) & (\ref{reln:182329}) \\
&= x^{-1}y^3xyt_1t_6. & (\ref{reln:given-3})
\end{align*}
Thus \(t_1t_2t_8 = x^{-1}y^3xyt_1t_6\). We compute:
\begin{align*}
	(t_1t_8)t_6 &= y^3x^3 (t_{26}t_{27})t_6 & (\ref{reln:182329}) \\
	&= x^{-1}yxyx^{-1}yx(t_{10}t_{26})t_3 & (\ref{reln:182329}) \\
	&= y^2t_{10}t_3. & (\ref{reln:given-3})
\end{align*}
Thus \(t_1t_8t_6 = y^2t_{10}t_3\). Conjugating \(t_1t_2t_8 = x^{-1}y^3xyt_1t_6\) by \(xy^{-2}x^{-1}y^{-1}x^{-1}y\) and \(t_1t_8t_6 = y^2t_{10}t_3\) by \(x^{-1}y^{-1}x^{-1}y^3\) we obtain
\[
t_{28}t_{27}t_{23} = x^2yxy^2t_{28}t_{31}\text{ and }t_{23}t_{27}t_3 = yx^{-4}yt_{28}t_{31}.
\]
Hence, \(t_{28}t_{27}t_{23} = xy^2xyx^{-2}t_{23}t_{27}t_3\) and so
\[
t_{23}t_3t_{27} = xy^2xyx^{-2}t_{23}t_{18}t_{23}
\]
as desired.
\end{proof}

\begin{lemma}
We have the relation
	\begin{equation}\label{reln:22224}
		t_2t_{22}t_{24} = x^{-1}y^2xyx^{-1}yt_2t_{22}t_{21}.
	\end{equation}
\end{lemma}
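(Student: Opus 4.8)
The plan is to derive this relation by the same symmetric-generator rewriting used in the preceding lemmas, treating each previously established identity as a two-sided rewriting rule. Recall that any relation of the form \(w=\nu\), with \(w\) a word in the \(t_i\) and \(\nu\in\mathcal{N}\), may be conjugated by an arbitrary \(g\in\mathcal{N}\): then \(g^{-1}wg = g^{-1}\nu g\), where the left side is again a word in the symmetric generators (each \(t_i\) being replaced by \(t_{i^g}\) under the action fixed in Proposition~\ref{prop:sym-gen-exist}) and the right side is again an element of \(\mathcal{N}\). Thus each of \eqref{reln:given-3}, \eqref{reln:given-6}, \eqref{reln:182329}, and \eqref{reln:231823} furnishes an entire \(\mathcal{N}\)-orbit of usable relations. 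My first move is to record the reformulation obtained by multiplying the target on the right by \((t_2t_{22}t_{21})^{-1}=t_{21}t_{22}t_2\), namely
\[
t_2t_{22}t_{24}t_{21}t_{22}t_2 = x^{-1}y^2xyx^{-1}y ,
\]
so that the claim is precisely that this length-six product collapses into \(\mathcal{N}\); equivalently it asserts the coset identity \(\mathcal{N}t_2t_{22}t_{24}=\mathcal{N}t_2t_{22}t_{21}\), i.e. \(t_2t_{22}t_{24}\sim t_2t_{22}t_{21}\), which is exactly the kind of coset collapse needed to advance the double coset enumeration. I note in passing that the index pattern \((2,9,13,15,4,2)\) of \eqref{reln:given-6} cannot be carried onto \((2,22,24,21,22,2)\) by a single element of \(\mathcal{N}\) (no permutation sends both \(9\) and \(4\) to \(22\)), so a direct conjugation will not suffice and a genuine chain of substitutions is required.

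Because the two words \(t_2t_{22}t_{24}\) and \(t_2t_{22}t_{21}\) are to be related, I would follow the template used to prove \eqref{reln:231823}: reduce each separately to the form (element of \(\mathcal{N}\))\(\cdot\)(common word) and then eliminate the common word. Concretely, I would apply to \(t_2t_{22}t_{24}\) a sequence of conjugated instances of \eqref{reln:given-6}, \eqref{reln:given-3}, and \eqref{reln:182329}, collecting every \(\mathcal{N}\)-factor to the left, until it reads \(\nu_1 C\) for some \(\nu_1\in\mathcal{N}\) and some short word \(C\); then do the same for \(t_2t_{22}t_{21}\), arranging the choices of conjugating elements so that the reduction terminates in the \emph{same} tail \(C\), giving \(\nu_2 C\). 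Eliminating \(C\) yields \(t_2t_{22}t_{24}=\nu_1\nu_2^{-1}\,t_2t_{22}t_{21}\), after which it remains only to simplify \(\nu_1\nu_2^{-1}\) inside \(\mathcal{N}=2^5{:}A_5\) to the stated \(x^{-1}y^2xyx^{-1}y\). An equally viable alternative is to reduce the length-six word of the reformulation directly to \(\mathcal{N}\) in one chain, in the style of the long computation proving \eqref{reln:182329-word}.

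The genuinely hard part is not any individual algebraic step — each is a mechanical replacement of a subword by a relabeled copy of a known relation, followed by gathering the \(\mathcal{N}\)-part on the left — but rather the discovery of the correct chain. At every stage one must spot a subword of the current expression that, after relabeling by some \(g\in\mathcal{N}\), coincides with the left-hand side of an available relation, and one must select these \(g\) so that the two branches converge on a shared tail \(C\) (or so that the single chain terminates in \(\mathcal{N}\)). This is essentially a guided search through the coset graph, and in practice the path is located by the double coset enumeration routine; verifying it by hand then reduces to bookkeeping the permutation images \(i\mapsto i^g\) and confirming that the residual \(\mathcal{N}\)-word equals \(x^{-1}y^2xyx^{-1}y\). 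I would expect the decisive shortenings to come from \eqref{reln:182329}, since it alone collapses a block of four generators entirely into \(\mathcal{N}\) and so strictly reduces word length rather than merely permuting generators.
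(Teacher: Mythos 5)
There is a genuine gap: what you have written is a description of a search strategy, not a proof. The entire content of this lemma is the explicit chain of substitutions, and you never produce it. You correctly observe that no single conjugation works and that a chain is needed, and you correctly predict the architecture --- reduce each of \(t_2t_{22}t_{24}\) and \(t_2t_{22}t_{21}\) to the form \(\nu_i\cdot C\) for a common tail \(C\) and then eliminate \(C\) --- but you then defer the actual chain to ``the double coset enumeration routine.'' In a symmetric-generation argument of this kind, naming the rewriting rules and asserting that some sequence of them terminates correctly proves nothing about the specific element \(x^{-1}y^2xyx^{-1}y\); a different (or no) collapse is a priori possible, and the verification consists precisely of the bookkeeping you have omitted.

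For comparison, the paper's proof instantiates your template with \(C=t_2t_1\): a three-step chain using (\ref{reln:given-3}) and (\ref{reln:182329}) reduces \(t_2t_{22}t_{24}\) to \(xy^{-1}xyxy^2\,t_2t_1\), and --- a small economy you did not anticipate --- the companion identity \(t_2t_{22}t_{21}=y^{-1}x^{-1}yx^{-1}y^{-1}\,t_2t_1\) is not obtained by a second independent reduction but by conjugating the first identity by \((x^2y^{-1}x^{-2})^2\), an element fixing \(1,2,22\) and carrying \(24\) to \(21\). Eliminating \(t_2t_1\) then gives the stated \(\mathcal{N}\)-element. One further caution: your opening claim that the lemma is ``equivalent'' to the coset identity \(t_2t_{22}t_{24}\sim t_2t_{22}t_{21}\) is backwards --- the lemma implies that coset collapse, but the coset identity alone only gives \emph{some} \(\nu\in\mathcal{N}\) with \(t_2t_{22}t_{24}=\nu\,t_2t_{22}t_{21}\), whereas the specific value of \(\nu\) is exactly what later computations (e.g.\ the proof of (\ref{reln:2125})) consume.
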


\begin{proof}
We compute
\begin{align*}
t_2t_{22}(t_{24}) &= (xyx^{-1})^3 t_{22}t_2t_{24}(t_5) & (\ref{reln:given-3}) \\
&= (x^{-1}y^{-1}x^{-1})^2 & (\ref{reln:given-3}) \\
&= x^2yxyx^{-1}yt_7t_{15}(t_8t_5)t_1 & (\ref{reln:182329}) \\
&= y^{-2}x^{-1}y^{-1}x^{-2}y^{-1} (t_{27}t_{11}t_8)t_1 & (\ref{reln:given-3}) \\
&= xy^{-1}xyxy^2t_2t_1.
\end{align*}
Thus \(t_2t_{22}t_{24} = xy^{-1}xyxy^2t_2t_1\). Conjugating this relation by \( (x^2y^{-1}x^{-2})^2\) yields
\[
t_2t_{22}t_{21} = y^{-1}x^{-1}yx^{-1}y^{-1}t_2t_1
\]
Now
\begin{align*}
t_2t_{22}t_{24} &= xy^{-1}xyxy^2(t_2t_1) \\
&= xy^{-1}xyxy^2(y^{-1}x^{-1}yx^{-1}y^{-1})^{-1}t_2t_{22}t_{21}  \\
&= x^{-1}y^2xyx^{-1}yt_2t_{22}t_{21}.
\end{align*}
\end{proof}

\begin{lemma}
We have the relation
	\begin{equation}\label{reln:2125}
		t_2t_1t_2t_5 = t_5t_2t_1t_2.
	\end{equation}
\end{lemma}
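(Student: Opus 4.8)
The relation to be established is equivalent to the assertion that the involution $t_2t_1t_2$ (a conjugate of $t_1$, hence itself of order $2$) commutes with $t_5$; equivalently, that $(t_2t_1t_2t_5)^2=e$. My plan is to prove it by the same mechanism used for the preceding lemmas: a chain of rewrites of the word $t_2t_1t_2t_5$ that alternately applies an established relation and absorbs the resulting symmetric generators into the control group, until the word $t_5t_2t_1t_2$ is reached.

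The natural starting point is relation~\eqref{reln:22224}, whose proof already records the identities $t_2t_{22}t_{24}=xy^{-1}xyxy^2\,t_2t_1$ and $t_2t_{22}t_{21}=y^{-1}x^{-1}yx^{-1}y^{-1}\,t_2t_1$. These express the subword $t_2t_1$ as a control-group element times a length-$3$ word, and so provide the hook for manipulating $t_2t_1t_2$. The second tool is relation~\eqref{reln:231823}, which has exactly the palindromic shape $t_at_bt_a=W\,t_at_ct_d$ with $W\in\mathcal N$; conjugating it by an element $g\in\langle x,y\rangle$ chosen so that $23\mapsto 2$ and $18\mapsto 1$ converts it into an identity $t_2t_1t_2=V\,t_2t_{c'}t_{d'}$ for a suitable $V\in\mathcal N$ and indices $c',d'$. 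Here $g$ (and hence $V,c',d'$) is located by a direct search using the explicit permutations for $x$ and $y$ from Proposition~\ref{prop:sym-gen-exist}. Substituting this form into both $t_2t_1t_2t_5$ and $t_5t_2t_1t_2=V\,t_{5^{V}}t_2t_{c'}t_{d'}$ reduces the claim to an identity between two length-$4$ words, which can then be collapsed using relation~\eqref{reln:182329} together with its $\mathcal N$-conjugates, the engine that turns length-$4$ words in the orbit of $(1,8,23,29)$ into pure control-group elements, exactly as in the proofs of~\eqref{reln:182329} and~\eqref{reln:231823}.

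The main obstacle is not any single rewrite but the bookkeeping that threads them together: one must choose the conjugating elements so that the indices appearing after each substitution again lie in an orbit on which a known relation or collapse applies, and one must order the substitutions so that the intermediate word length never exceeds what~\eqref{reln:182329} can absorb. Since every candidate step is validated against the fixed permutation action of $x$ and $y$, the individual replacements are routine; the real work is arranging them so that the two sides meet. I expect the cleanest route to pass through the intermediate words $t_2t_{22}t_{24}$ and $t_2t_{22}t_{21}$ already computed for relation~\eqref{reln:22224}, reusing that lemma rather than re-deriving its content, and to finish by recognizing a single length-$4$ collapse that reconciles $t_2t_{c'}t_{d'}t_5$ with $t_{5^{V}}t_2t_{c'}t_{d'}$.
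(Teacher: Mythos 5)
Your reduction of the statement to ``$t_2t_1t_2$ commutes with $t_5$'' is correct, and the toolbox you name (relations~\eqref{reln:231823}, \eqref{reln:22224}, \eqref{reln:182329}, \eqref{reln:given-6} together with the explicit action of $x$ and $y$) is the right one. But what you have written is a plan, not a proof. In this setting the entire content of the lemma \emph{is} the explicit chain of rewrites: you must exhibit the conjugating elements $g$, the resulting control-group elements $V$, the intermediate indices $c',d'$, and the verification that the chain actually terminates in $t_5t_2t_1t_2$. None of these appear. Your own phrasing (``is located by a direct search,'' ``I expect the cleanest route to pass through\dots,'' ``the real work is arranging them so that the two sides meet'') concedes that the essential step --- checking that the substitutions can be threaded together so that the two sides coincide --- has not been carried out. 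There is no a priori reason a given ordering of these rewrites closes up, so the gap is not merely presentational.

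For comparison, the paper's proof does not manipulate $t_2t_1t_2t_5$ directly. It starts from the $\mathcal N$-equivalent word $t_6t_5t_6t_1$, rewrites $t_6t_5t_6$ via \eqref{reln:182329} into a word containing the palindrome $t_{23}t_{18}t_{23}$, applies \eqref{reln:231823} and \eqref{reln:22224}, and collapses with \eqref{reln:given-6} to obtain $t_6t_5t_6t_1=(x^{-1}yx)^3t_2t_5$. The finishing move is then not a length-$4$ collapse at all: since $(x^{-1}yx)^3$ is an involution interchanging $2\leftrightarrow5$, inverting both sides gives $t_1t_6t_5t_6=(x^{-1}yx)^{-3}t_2t_5=t_6t_5t_6t_1$, and conjugating by a suitable element of $\mathcal N$ yields \eqref{reln:2125}. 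Your outline does not anticipate this inversion trick, which is the idea that makes the argument close; if you want to salvage your approach, you should either carry out your proposed chain explicitly to completion or redirect it toward producing an identity of the form $t_at_bt_at_c=\sigma\, t_dt_e$ with $\sigma$ an involution swapping $d$ and $e$, after which the same inversion argument applies.
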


\begin{proof}
We compute
\begin{align*}
	t_6t_5t_6(t_1) &= xyx^{-1}yx^{-1}y^{-2}(t_{23}t_{18}t_{23})t_{27}t_6t_{25} & (\ref{reln:182329}) \\
	&= yxy^2xyxyt_{23}t_3(t_6t_{25}) & (\ref{reln:231823}) \\
	&=x^2y^{-1}x^{-1}y^{-1}(t_2t_{22}t_{24})t_{10} & (\ref{reln:182329}) \\
	&= xy^{-1}x^{-4}yt_2t_{22}t_{21}t_{10} & (\ref{reln:22224}) \\
	&= (x^{-1}yx)^3t_2t_5. & (\ref{reln:given-6})
\end{align*}
Thus \(t_6t_5t_6t_1 = (x^{-1}yx)^3t_2t_5\) and by inverting:
\begin{align*}
t_1t_6t_5t_6 &= (x^{-1}yx)^{-3}t_2t_5 = t_6t_5t_6t_1.
\end{align*}
\end{proof}

\begin{lemma}\label{lem:12125}
	We have the relation
	\begin{equation}\label{reln:12125}
		t_1t_2t_1t_2 = (yx^{-1}yxy^{-1})^3t_1t_2t_1t_2t_5
	\end{equation}
\end{lemma}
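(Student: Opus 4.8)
The plan is to strip the identity down to a single control-group collapse and then verify that collapse. Writing \(W = t_1t_2t_1t_2\) and \(g = yx^{-1}yxy^{-1}\), the assertion \eqref{reln:12125} reads \(W = g^3 W t_5\). Right-multiplying by \(t_5\) and using \(t_5^2 = e\) turns this into \(Wt_5 = g^3 W\), i.e.
\[
g^3 = Wt_5W^{-1} = t_1t_2t_1t_2\,t_5\,t_2t_1t_2t_1 .
\]
Hence the whole lemma is equivalent to the single relation \(t_1t_5t_1 = g^3\), and I would organize the proof around producing that equation.

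First I would reduce the nine-letter word \(Wt_5W^{-1}\) to \(t_1t_5t_1\) using only the previous lemma. Since \(t_2t_1t_2\) is an involution, \((t_2t_1t_2)^2 = e\), so multiplying \eqref{reln:2125} on the right by \(t_2t_1t_2\) gives the clean identity \(t_2t_1t_2\,t_5\,t_2t_1t_2 = t_5\). Conjugating this by \(t_1\) immediately yields \(t_1t_2t_1t_2\,t_5\,t_2t_1t_2t_1 = t_1t_5t_1\), which is exactly \(Wt_5W^{-1}\). Thus, modulo \eqref{reln:2125}, the target \eqref{reln:12125} holds if and only if \(t_1t_5t_1 = g^3\), and the equivalence is genuinely an \emph{iff}: running the computation backwards recovers \(W = g^3Wt_5\) from \(t_1t_5t_1 = g^3\).

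The remaining and genuinely new step is to prove \(t_1t_5t_1 = (yx^{-1}yxy^{-1})^3\), i.e. that this conjugate of a symmetric generator collapses into the control group. This cannot follow from the commuting relation \eqref{reln:2125} alone, since that relation equates words in the \(t_i\) and can never by itself force a \(t_i\)-word to equal a nontrivial element of \(\mathcal{N}\); it must use the defining relations. My first attempt would be to obtain it by conjugating the fundamental relation \eqref{reln:given-3}, \(\tau = t_1t_4t_1\): if some \(n\in\mathcal{N}=\langle x,y\rangle\) fixes the point \(1\) and sends \(4\mapsto 5\), then \(n(t_1t_4t_1)n^{-1} = t_1t_5t_1 = n\tau n^{-1}\in\mathcal{N}\), and one need only compute the permutation \(n\tau n^{-1}\) and check it equals \(g^3\); note that \(g\) is conjugate to \(y\) (indeed \(y^{-1}gy = x^{-1}yx\)) and hence has order \(6\), so \(g^3\) is an involution, consistent with \(t_1t_5t_1\) being one. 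If \(4\) and \(5\) lie in different suborbits of the point stabilizer, no such \(n\) exists and \(t_1t_5t_1\) must instead be built by a symbolic chain of substitutions from \eqref{reln:182329}, \eqref{reln:231823} and \eqref{reln:22224}, in the style of \eqref{reln:182329-word}, with \(g^3\) accumulating from the control-group prefixes. I expect that prefix bookkeeping — carrying the correct element of \(\mathcal{N}\) through each rewrite — to be the main obstacle, exactly as in the longer lemmas above.
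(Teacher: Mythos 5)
Your proposal is correct and is essentially the paper's proof reorganized: the paper likewise combines \eqref{reln:2125} with the conjugate \(t_1t_5=(yx^{-1}yxy^{-1})^3t_1\) of \eqref{reln:given-3}, computing \(t_1(t_2t_1t_2t_5)=(t_1t_5)t_2t_1t_2=(yx^{-1}yxy^{-1})^3t_1t_2t_1t_2\) and then cancelling \(t_5\). Your ``first attempt'' branch is the one that applies, since \(4\) and \(5\) lie in the same \(\mathcal{N}^1\)-orbit \(\{4,5,9,11,22\}\); the paper obtains the needed identity by conjugating \eqref{reln:given-3} by \(x^{-4}y^{2}\).
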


\begin{proof}
	Now
	\begin{align*}
	t_1(t_2t_1t_2t_5) &= (t_1t_5)t_2t_1t_2 & (\ref{reln:2125}) \\
	&= (yx^{-1}yxy^{-1})^3t_1t_2t_1t_2. & (\ref{reln:given-3})
	\end{align*}
\end{proof}

\begin{lemma}
	We have the relation:
	\begin{equation}\label{reln:97927}
	t_1t_2t_1t_2 = (y^{-1}xyx^{-1}y)^3t_9t_7t_9t_2t_7
	\end{equation}
\end{lemma}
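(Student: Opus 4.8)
The plan is to treat \eqref{reln:97927} as the index-shifted twin of \eqref{reln:12125}: both rewrite the coset word $t_1t_2t_1t_2$ as an element of $\mathcal{N}$ times a length-five word in the symmetric generators, and the two lemmas are evidently meant to be combined later so that $(yx^{-1}yxy^{-1})^3t_1t_2t_1t_2t_5$ and $(y^{-1}xyx^{-1}y)^3t_9t_7t_9t_2t_7$ become identified. I would therefore try to imitate the short argument of Lemma \ref{lem:12125}, transported to the index set $\{2,7,9\}$. First I would produce, by conjugating the reordering relation \eqref{reln:2125} by a suitable $g\in\mathcal{N}=\langle x,y\rangle$ (one sending $1\mapsto 9$, $2\mapsto 7$, $5\mapsto 2$), a companion reordering relation supported on $7,9,2$, together with a conjugate of the contraction \eqref{reln:given-3} that collapses the relevant product of two symmetric generators into a single word in $x$ and $y$. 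I would then peel $t_9t_7t_9t_2t_7$ apart, apply the conjugated reordering relation to move the isolated generator into contracting position, replace the resulting adjacent pair by its $\mathcal{N}$-value, and finally re-express the leftover word $t_9t_7t_9t_7$ in terms of $t_1t_2t_1t_2$ using that the two are related by the relabelling $1\mapsto 9,\,2\mapsto 7$, at the cost of one further conjugation.

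The main obstacle is that this mirroring does not go through verbatim, and the honest difficulty is bookkeeping rather than concept. Observe that $t_9t_7t_9t_2t_7$ is genuinely \emph{not} a relabelling of $t_1t_2t_1t_2t_5$: the repeated index $2$ sits in incompatible positions, so the two words have different occurrence patterns (the shape $ABACB$ versus $ABABC$). Consequently a single conjugation cannot suffice, and, unlike in Lemma \ref{lem:12125}, the trailing four letters $t_7t_9t_2t_7$ have shape $XYZX$ rather than the shape $XYXZ$ that matches \eqref{reln:2125} directly. One must therefore either regroup so that a conjugate of \eqref{reln:2125} applies (the prefix $t_9t_7t_9t_2$ does have the required shape $XYXZ$) or fall back to a longer chain computation in the style of the proof of \eqref{reln:182329-word}, interleaving conjugations with the given relations \eqref{reln:given-3}, \eqref{reln:given-6} and the contraction \eqref{reln:182329}.

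Whichever route is taken, each reordering move and each contraction multiplies the accumulated prefix by a nontrivial element of the control group, and the crux is to verify that these factors telescope to \emph{exactly} $(y^{-1}xyx^{-1}y)^3$ and not merely to some conjugate of it. I would discharge this final identity by a direct computation inside $\mathcal{N}\cong 2^5{:}A_5$ using the explicit permutation images of $x$ and $y$ from Proposition \ref{prop:sym-gen-exist}. I expect the exponent can be presented with a positive power (rather than its inverse) for the same reason as in \eqref{reln:12125}, namely that $y^{-1}xyx^{-1}y$ has order $6$, so that its cube is an involution equal to its own inverse; confirming that order in $\langle x,y\rangle$ would be the last routine check.
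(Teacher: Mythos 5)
Your plan misses the one idea the paper actually uses, and as written it does not close. The paper's proof of (\ref{reln:97927}) is a two-line computation: insert $t_7t_7=e$ to write $t_1t_2t_1t_2=t_1t_2t_1(t_2t_7)t_7$, and then contract the bracketed pair by a conjugate of Relation (\ref{reln:given-3}). The point is that the pair $(2,7)$ lies in the $\mathcal{N}$-orbit of $(1,4)$ --- for instance $x$ carries $(1,4)$ to $(2,6)$, and $6$ and $7$ lie in the same orbit $\{3,6,7,14,17\}$ of the stabilizer of $2$ --- so $t_2t_7t_2=\sigma\in\mathcal{N}$ with $\sigma=(y^{-1}xyx^{-1}y)^3$, an involution interchanging $2$ and $7$ and sending $1$ to $9$. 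Substituting $t_2t_7=\sigma t_2$ and pushing $\sigma$ to the front conjugates $t_1t_2t_1$ into $t_9t_7t_9$, yielding $\sigma\, t_9t_7t_9t_2t_7$ in a single step. No appeal to (\ref{reln:2125}) or (\ref{reln:12125}) is needed, and there is no telescoping of several control-group factors to verify: exactly one factor $\sigma$ ever appears, so the identity of the prefix is a single check in $\mathcal{N}$.

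By contrast, your primary route --- transporting Lemma \ref{lem:12125} by a relabelling --- fails for precisely the reason you yourself identify: the occurrence patterns $ABABC$ and $ABACB$ are different, so no element of $\mathcal{N}$ carries $t_1t_2t_1t_2t_5$ to $t_9t_7t_9t_2t_7$. Your two fallbacks are then left unexecuted: you never verify that $(9,7,2)$ lies in the $\mathcal{N}$-orbit of the support of (\ref{reln:2125}) so that a conjugate applies to the prefix $t_9t_7t_9t_2$, and ``a longer chain computation in the style of the proof of (\ref{reln:182329-word})'' is a promise rather than an argument. A proposal whose main branch is conceded not to work and whose alternative is deferred to an unspecified computation has not established the relation. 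The missing ingredient is the single observation above: a conjugate of the order-three relation already collapses $t_2t_7$, after which the stated equality is immediate.
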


\begin{proof}
	We compute:
	\begin{align*}
		t_1t_2t_1t_2 &= t_1t_2t_1(t_2t_7)t_7 \\
		&= (y^{-1}xyx^{-1}y)^3t_9t_7t_9t_2t_7.
	\end{align*}
\end{proof}

\begin{lemma}
	We have the relation:
	\begin{equation}\label{reln:2223107}
		t_1t_2t_1t_2 = x^{-1}y^{-1}x^{-1}y^3x^{-1}t_{22}t_{23}t_{10}t_7
	\end{equation}
\end{lemma}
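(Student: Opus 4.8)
The statement is another single-coset collapse: since \(x^{-1}y^{-1}x^{-1}y^3x^{-1}\in\mathcal{N}\), relation~\eqref{reln:2223107} asserts that \(\mathcal{N}t_1t_2t_1t_2 = \mathcal{N}t_{22}t_{23}t_{10}t_7\), placing the length-four word \(t_{22}t_{23}t_{10}t_7\) in the same single coset already named by \(t_1t_2t_1t_2\) in \eqref{reln:12125} and \eqref{reln:97927}. The plan is therefore to produce it exactly as those two relations were produced: a finite chain of rewrites of \(t_1t_2t_1t_2\), each step applying one of the core relations \eqref{reln:given-3}, \eqref{reln:given-6}, \eqref{reln:182329}, or one of the derived lemmas, and each step either shortening the symmetric-generator word or left-multiplying by a controlled element of \(\mathcal{N}\).

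The most economical route is to bootstrap from the relation just established. Both \eqref{reln:97927} and the target terminate in \(t_7\), so I would start from \(t_1t_2t_1t_2 = (y^{-1}xyx^{-1}y)^3 t_9t_7t_9t_2t_7\) and rewrite the length-five tail \(t_9t_7t_9t_2t_7\). The prefix \(t_9t_7t_9\) is a palindrome \(t_at_bt_a\), exactly the shape handled by \eqref{reln:231823}; applying the appropriate \(\mathcal{N}\)-conjugate of that relation replaces it by a control element times a new length-three word, after which a conjugate of \eqref{reln:given-3} or \eqref{reln:given-6} collapses the resulting adjacent pair into \(\mathcal{N}\), dropping the overall word length to four. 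Alternatively, one may ignore \eqref{reln:97927} and build a direct substitution chain in the style of the proof of \eqref{reln:182329-word}: insert \(t_jt_j = e\) at a chosen position to expose a subword matching the left-hand side of a core relation, apply it, and iterate until the word \(t_{22}t_{23}t_{10}t_7\) is reached.

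The routine part is the combinatorics of the substitution; the delicate part is tracking the control-group coefficient \emph{exactly}. Each application of a core relation both shortens the word and left-multiplies by a specific permutation in \(2^5:A_5\), and these accumulated permutations must be composed and then simplified to the precise normal form \(x^{-1}y^{-1}x^{-1}y^3x^{-1}\), not merely to some representative of the coset. I expect this bookkeeping to be the main obstacle, and I would discharge it by carrying the coefficient through each line as an explicit word in \(x\) and \(y\) and verifying the final equality against the permutation representations of \(x\), \(y\), \(\pi\), and \(\tau\) recorded in Proposition~\ref{prop:sym-gen-exist} and the definition of \(\mathcal{P}\). A final sanity check is that both sides act identically on the coset space, which confirms the word \(t_{22}t_{23}t_{10}t_7\) and the coefficient simultaneously.
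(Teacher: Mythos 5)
Your proposal correctly identifies the genre of the lemma and the right launching point: the paper's proof does begin from \eqref{reln:97927}, writing \(t_1t_2t_1t_2 = (y^{-1}xyx^{-1}y)^3t_9t_7t_9t_2t_7\), and then rewrites the length-five tail down to \(t_{22}t_{23}t_{10}t_7\) while accumulating the control coefficient. To that extent you have reconstructed the strategy. But the content of a lemma of this kind \emph{is} the explicit substitution chain, and you have deferred it entirely to ``the routine part.'' Nothing in your write-up produces the word \(t_{22}t_{23}t_{10}t_7\) or the coefficient \(x^{-1}y^{-1}x^{-1}y^3x^{-1}\); you describe how one would search for a derivation, not a derivation. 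As it stands the proposal is a plan, not a proof.

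The one concrete tactical step you do commit to is also unjustified and is not what the paper does. You propose to attack the palindrome \(t_9t_7t_9\) with \eqref{reln:231823}. That relation is a statement about the specific triple \(t_{23}t_{18}t_{23}\); to apply it to \(t_9t_7t_9\) you would need an element of \(\mathcal{N}\) carrying the ordered pair \((23,18)\) to \((9,7)\), i.e.\ the two pairs must lie in the same orbital of \(\mathcal{N}\) on \(X\times X\). The suborbits of a point stabilizer have sizes \(1,1,5,5,20\), so this is a genuine condition that can fail, and you never check it. The paper's proof avoids the palindrome altogether: its first move is to expand the adjacent pair \(t_9t_2\) using a conjugate of the order-six relation \eqref{reln:given-6} (which rewrites a pair as a control element times a length-four word, temporarily \emph{lengthening} the word to seven letters), and all subsequent collapses are conjugates of \(\tau = t_1t_4t_1\) from \eqref{reln:given-3}; neither \eqref{reln:231823} nor any other derived lemma is used. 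So even granting your framework, the step you single out as the engine of the reduction is the one most likely not to go through, and the monotone ``each step shortens the word'' picture you describe does not match the actual derivation.
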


\begin{proof}
	We compute:
	\begin{align*}
		t_1t_2t_1(t_2) &= (y^{-1}xyx^{-1}y)^3t_9t_7(t_9t_2)t_7 & (\ref{reln:given-6}) \\
		&= x^{-1}y^{-2}x^{-1}y^2x t_2t_1t_9(t_{27}t_{19}t_5)t_7 & (\ref{reln:given-3}) \\
		&= x^{-2}y^{-1}x^{-1}y^{-1}xy t_{30}(t_{29}t_{17})t_{27}t_3t_{10}t_7 & (\ref{reln:given-3}) \\
		&= x^{-1}y^{-2}xyx^{-1} (t_{22} t_{29} t_{27} t_3) t_{10} t_7 & (\ref{reln:given-3}) \\
		&= x^{-1}y^{-1}x^{-1}y^3x^{-1}t_{22}t_{23}t_{10}t_7.
	\end{align*}
\end{proof}

\begin{lemma}
We have the relation:
\begin{equation}\label{reln:2510}
t_2t_5t_{10} = x^2yxyxy^{-1}t_2t_5t_{31} = x^2yxyxy^{-1}xyxy^2t_2t_{22}t_{23}.
\end{equation}
\end{lemma}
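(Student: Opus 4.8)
The statement packages two separate coset identities, so the natural plan is to prove each in turn and concatenate. Reading the chain of equalities as
\[
t_2t_5t_{10} \;=\; x^2yxyxy^{-1}\,t_2t_5t_{31} \qquad\text{and}\qquad t_2t_5t_{31} \;=\; xyxy^2\,t_2t_{22}t_{23},
\]
the full lemma follows formally from these two, since substituting the second into the first and using $(x^2yxyxy^{-1})(xyxy^2) = x^2yxyxy^{-1}xyxy^2$ reproduces the right-hand expression $x^2yxyxy^{-1}xyxy^2\,t_2t_{22}t_{23}$. Thus I would reduce the lemma to establishing these two relations, the first exchanging the tail $t_{10}$ for $t_{31}$ and the second rewriting $t_5t_{31}$ as $t_{22}t_{23}$, each modulo a control-group prefactor.

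Each of these has exactly the shape of Lemma~\eqref{reln:22224}: the leading generators are essentially fixed while a short tail is altered. I would therefore mirror the strategy used there. For the first relation, I would collapse $t_2t_5t_{10}$ to a control-group element times a short common word (a length-two product $t_at_b$, as $t_2t_1$ served in~\eqref{reln:22224}) by a chain of the base relations~\eqref{reln:given-3} and~\eqref{reln:given-6} together with the already-established~\eqref{reln:182329},~\eqref{reln:231823}, and~\eqref{reln:22224}; then collapse $t_2t_5t_{31}$ to a control element times the \emph{same} $t_at_b$, and eliminate the common tail to read off the prefactor $x^2yxyxy^{-1}$. The relation~\eqref{reln:2125}, which manipulates precisely the indices $1,2,5$, is a likely ingredient. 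The driving mechanism throughout is that conjugating $\tau = t_1t_4t_1$ and $\pi^4 = t_2t_9t_{13}t_{15}t_4t_2$ by any $g\in\mathcal{N}$ yields $\tau^g = t_{1^g}t_{4^g}t_{1^g}$ and the corresponding six-generator identity, so a product of three (resp.\ six) symmetric generators whose indices lie in the appropriate $\mathcal{N}$-orbit configuration can be traded for a control-group word. The second relation is handled by the analogous reduction of $t_2t_5t_{31}$ and $xyxy^2\,t_2t_{22}t_{23}$ to a shared short word.

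The only genuinely hard part is the \emph{search}: at each stage one must select which conjugate of~\eqref{reln:given-3} or~\eqref{reln:given-6} to apply so that the chain eventually closes up on a common tail. This is a guided walk through the Cayley graph being built in \S\ref{sec:dce}, found in practice either by machine or by the same hand-computation that produced the earlier lemmas; there is no a priori formula telling one the right move. Once a chain is fixed, however, verification is purely mechanical: one checks that each replaced sub-word matches the cited conjugated relation and that the accumulated control-group element multiplies out correctly under the permutation actions of $x$, $y$, and $\pi$ on $\{1,\dots,32\}$. I would present the two reductions as two aligned displays in the format of the preceding lemmas, annotating each line with the relation invoked, and then record the one-line substitution that yields the second equality of the statement.
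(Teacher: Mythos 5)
Your reading of the statement is correct, and your decomposition into the two sub-identities \(t_2t_5t_{10} = x^2yxyxy^{-1}\,t_2t_5t_{31}\) and \(t_2t_5t_{31} = xyxy^2\,t_2t_{22}t_{23}\), followed by substitution of the second into the first, is exactly how the paper organizes the proof. The tactic the paper uses for the first identity is a minor variant of yours: rather than reducing \(t_2t_5t_{10}\) and \(t_2t_5t_{31}\) separately to a control element times a common two-letter tail and cancelling, it forms the single six-letter word \(t_2t_5t_{10}t_{31}t_5t_2\) (the first word times the inverse of the second) and collapses it entirely to the control element \(x^2yxyxy^{-1}\) by a chain of applications of \eqref{reln:given-3} and \eqref{reln:given-6}, beginning with the insertion of \(t_7t_7\) after the final \(t_2\). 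These are equivalent formulations of the claim that the two words differ by a control element, so there is no issue of substance there. For the second identity you overestimate the work: the paper needs only a single conjugate of \eqref{reln:given-6}, namely \(t_2t_5t_{31}t_{23}t_{22}t_2 = xyxy^2\), with no search and no auxiliary lemmas.

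The gap is that your proposal never produces the chains. For a lemma of this kind the explicit sequence of substitutions --- which conjugate of \(\tau = t_1t_4t_1\) or \(\pi^4 = t_2t_9t_{13}t_{15}t_4t_2\) is applied to which sub-word at each step, and what control-group prefix accumulates --- is the entire content of the proof; everything else is boilerplate shared by all the lemmas of the section. Deferring the search to machine or hand computation accurately describes how such chains are found in practice, but until one is exhibited and each step is verified against the permutation action of \(x\) and \(y\) on \(\{1,\dots,32\}\), the lemma is not proved. Your guess at the ingredients is also off: the paper's chain for this particular lemma uses only the two defining relations \eqref{reln:given-3} and \eqref{reln:given-6}, and none of \eqref{reln:182329}, \eqref{reln:231823}, \eqref{reln:22224}, or \eqref{reln:2125}.
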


\begin{proof}
We compute
\begin{align*}
t_2t_5t_{10}t_{31}t_5t_2 &= t_2t_5t_{10}t_{31}t_5(t_2t_7)t_7 \\
&=(y^{-1}xy x^{-1}y)^3 (t_7t_{16}t_{29})t_{18}t_{16}t_2t_7 & (\ref{reln:given-3}) \\
&=xyx^{-1}yx^2y^{-1} t_7t_1t_{14}t_{18}t_{16}(t_2t_7) & (\ref{reln:given-6}) \\
&=x^3y^3x t_2t_9t_{20}(t_{31}t_5t_2) & (\ref{reln:given-3}) \\
&= (xy^{-2})^2(t_4t_{18}t_{20})t_{31}t_{28}t_4 & (\ref{reln:given-6}) \\
&= x^2y^2  t_4t_8t_{26}t_{31}t_{28}t_4 & (\ref{reln:given-6}) \\
&= x^2yxyxy^{-1} & (\ref{reln:given-6})
\end{align*}
This gives the first relation. For the second we have by (\ref{reln:given-6})
\[
	t_2t_5t_{31}t_{23}t_{22}t_2 = xyxy^2
\]
which completes the proof.
\end{proof}

\begin{lemma}
	We have the relation:
	\begin{equation}\label{reln:12121}
		t_1t_2t_1t_2t_1 = x^5.
	\end{equation}
\end{lemma}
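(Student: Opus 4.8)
The plan is to collapse the length-five palindrome \(t_1t_2t_1t_2t_1\) into the control group and then identify the resulting element as \(x^5\). Since \(x\in\langle x,y\rangle=\mathcal N\) we have \(x^5\in\mathcal N\), and because \(t_1t_2t_1t_2t_1=(t_1t_2t_1t_2)t_1\), the statement is equivalent to showing that right-multiplying the already-studied word \(t_1t_2t_1t_2\) by \(t_1\) pushes it entirely into \(\mathcal N\); that is, it suffices to prove \(t_1t_2t_1t_2=x^5t_1\). I would therefore start from one of the three expansions of \(t_1t_2t_1t_2\) established above, namely (\ref{reln:12125}), (\ref{reln:97927}), or (\ref{reln:2223107}), append a \(t_1\), and reduce the resulting five-generator tail using the commuting relation (\ref{reln:2125}) together with suitable \(\mathcal N\)-conjugates of the short relation (\ref{reln:given-3}), each of which deposits a palindromic triple \(t_it_jt_i\) into the control group.

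First I would record the conjugate of (\ref{reln:given-3}) that gives \(t_1t_5t_1=(yx^{-1}yxy^{-1})^3\) (exactly the ingredient behind (\ref{reln:12125})), so that any occurrence of \(t_5\) can be traded for the control element \(g=(yx^{-1}yxy^{-1})^3\) via \(t_5=t_1gt_1\). Next I would feed in one of the cross relations: from (\ref{reln:2223107}) we have \(t_1t_2t_1t_2=x^{-1}y^{-1}x^{-1}y^3x^{-1}\,t_{22}t_{23}t_{10}t_7\), and using (\ref{reln:2510}) I can rewrite \(t_{22}t_{23}\) through \(t_5t_{10}\); after the cancellation \(t_{10}t_{10}=e\) this shortens the tail, and after moving the control-group factors to the left via the permutation action on the indices, it should leave a single symmetric generator. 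That generator is forced to be \(t_1\), at which point the appended \(t_1\) cancels and only a word in \(x,y\) survives.

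The final step is to evaluate that accumulated word in \(x\) and \(y\) as a permutation of the \(32\) letters and verify that it equals the fixed-point-free involution \(x^5=(1,2)(3,4)(5,6)(7,9)\cdots\), the fifth power of \(x\); equivalently, one checks that the collected \(x,y\)-word and \(x^5\) coincide under the isomorphism \(2^5:A_5\cong\langle x,y\rangle\).

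The main obstacle is that the expansion (\ref{reln:12125}) I already possess is self-referential: it was derived solely from (\ref{reln:2125}) and the conjugate \(t_1t_5t_1=g\) of (\ref{reln:given-3}), so using it in isolation only reproduces structural identities. Concretely, substituting \(t_5=t_1gt_1\) into the palindrome \(t_1t_2t_1t_2t_5t_2t_1t_2t_1=g^{-1}\) extracted from (\ref{reln:12125}) yields \(u\,g\,u=g^{-1}\) with \(u=t_1t_2t_1t_2t_1\), while the parallel reduction yields \(g^2=e\); together these merely say that \(u\) centralizes \(g\) and never pin down the value of \(u\). Breaking this circle is the crux: the reduction must genuinely invoke a relation supported on generators outside \(\{t_1,t_2,t_5\}\) — one of (\ref{reln:97927}), (\ref{reln:2223107}), (\ref{reln:2510}), or a fresh conjugate of (\ref{reln:given-6}) — so that the \(32\)-point action is actually constrained. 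Once the correct cross relation is chosen, the remaining difficulty is purely the bookkeeping of the long \(x,y\)-words and ensuring the symmetric-generator tail reduces to exactly one \(t_1\) so that the palindrome closes up.
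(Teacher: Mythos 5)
Your opening reduction matches the paper's first two moves exactly: the paper also begins by applying (\ref{reln:12125}) to write \((t_1t_2t_1t_2)t_1\) as a control element times \((t_1t_2t_1t_2)t_5t_1\), and then substitutes (\ref{reln:2223107}) to obtain \(y^{-1}xy^{-1}x^{-1}y^{-1}x\,t_{22}t_{23}t_{10}t_7t_5t_1\). Your diagnosis of the circularity is also correct and worth making explicit: relations supported only on \(\{t_1,t_2,t_5\}\) can never determine the value of \(u=t_1t_2t_1t_2t_1\), only that it centralizes \((yx^{-1}yxy^{-1})^3\), so a cross relation such as (\ref{reln:2223107}) is indispensable.

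The gap is that everything after that point --- the actual collapse of the length-six tail \(t_{22}t_{23}t_{10}t_7t_5t_1\) to the empty word --- is asserted rather than carried out, and the one concrete device you propose does not accomplish it. Writing \(t_{22}t_{23}=t_2h^{-1}t_2t_5t_{10}\) from (\ref{reln:2510}), where \(h=x^2yxyxy^{-1}xyxy^2\), and substituting into \(t_{22}t_{23}t_{10}t_7\) does cancel the \(t_{10}\), but it leaves a tail of the same length four (a \(t_2\) followed by the \(h^{-1}\)-conjugates of \(t_2,t_5,t_7\)), so ``this shortens the tail'' is not yet true, and no further mechanism forcing termination is supplied. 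The paper needs a different device at exactly this juncture: it uses the fact that \(t_1t_2t_1t_2t_1\) is a palindrome in involutions, hence its own inverse, to flip \(y^{-1}xy^{-1}x^{-1}y^{-1}x\,t_{22}t_{23}t_{10}t_7t_5t_1\) into \((y^{-1}xy^{-1}x^{-1}y^{-1}x)^{-1}t_7t_{27}t_{25}t_{32}t_{22}t_2\), and then grinds through roughly a dozen further applications of (\ref{reln:given-3}) and (\ref{reln:given-6}) (passing again through material equivalent to (\ref{reln:22224}) and (\ref{reln:2510})) before the tail vanishes and \(x^5\) emerges. In this subject the proof \emph{is} that computation; a plan ending with ``it should leave a single symmetric generator \(\dots\) that generator is forced to be \(t_1\)'' has not proved the lemma, because whether the tail closes up at all is precisely the point in question.
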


\begin{proof}
	Notice that
	\[
	(y^{-1}xy^{-1}x^{-1}y^{-1}x t_{22}t_{23}t_{10}t_7t_5t_1)^{-1} = (y^{-1}xy^{-1}x^{-1}y^{-1}x)^{-1}t_7t_{27}t_{25}t_{32}t_{22}t_2.
	\]
	We compute:
	\begin{align*}
	(t_1t_2t_1t_2)t_1 &= yx^{-1}yxy^{-1} (t_1t_2t_1t_2)t_5t_1 & (\ref{reln:12125}) \\
	&= y^{-1}xy^{-1}x^{-1}y^{-1}xt_{22}t_{23}t_{10}t_7t_5t_1 & (\ref{reln:2223107})\\
	&= (y^{-1}xy^{-1}x^{-1}y^{-1}x)^{-1}(t_7t_{27})t_{25}t_{32}t_{22}t_2 & (t_1t_2t_1t_2t_1 \text{ is an involution}) \\
	&= yx^2yx^{-1}yx(t_7t_{25})t_{32}t_{22}t_2 & (\ref{reln:given-3}) \\
	&= y^{-1}xy^{-2}xy^{-1}  t_7t_{21}t_{22}(t_{23}t_{32})t_{22}t_2 & (\ref{reln:given-6}) \\
	&= yx^{-1}y^{-1}xy^{-2}(t_2t_{22}t_{21}) t_{23}t_{22}t_2 & (\ref{reln:given-3}) \\
	&= xy^3xy^{-1}xt_2t_5t_{10}(t_{23}t_{22}t_2) & (\ref{reln:given-6}) \\
	&= y^{_2}xyx^{-2}yt_2t_5t_{10}t_{31}t_5t_2 & (\ref{reln:given-3}) \\
	&= y^{-1}xyxyx^{-1} (t_7t_{16}t_{29})t_{18}t_{16}t_2 & (\ref{reln:given-3}) \\
	&= x * y^{-1}xyx^{-1}y^{-1}x^2t_7t_1t_{14} t_{18}t_{16} (t_2t_7) & (\ref{reln:given-3}) \\
	&= x^{-1}yxy^{-1}x^{-1}y^{-2}t_2t_9t_{20} (t_{31}t_5t_2) & (\ref{reln:given-3}) \\
	&= yx^{-1}yx^{-1}y^{-1}x^{-1}(t_4t_{18}t_{20})t_{31}t_{28}t_4 & (\ref{reln:given-3}) \\
	&= x^{-2}yx^{-1}y^{-1}x(t_4t_8t_{26}t_{31}t_{28}t_4) & (\ref{reln:given-6})\\
	&= x^{-2}yx^{-1}y^{-1}xy^{-1}xy xy^{-1}& (\ref{reln:given-3}) \\
	&= x^5
	\end{align*}
\end{proof}

\subsection{Double Cosets of Length 1}

There is a single double coset of length 0, denoted \([\star] = \mathcal{N}e\mathcal{N}\). The action of \(\mathcal{N}\) is transitive on \(X\) so there is one double coset of length 1. We denote it \([t_1] = \mathcal{N}t_1\mathcal{N}\).

Since our relation is an equality between words of length 3, the point stabilizer \(\mathcal{N}^1\) and the coset stabilizer \(\mathcal{N}^{(1)}\) or equal. In particular, the orbits of \(\mathcal{N}^{(1)}\) on \(X\) are
\begin{align*}
	&\{ \{1\}, \{2\}, \{3,6, 7, 14, 17\}, \{4, 5, 9, 11, 22\}, \\
	&\{8,20,23,21,16,12,19,30,28,27,32,31,26,18,15,13,24,29,10,25\}\}
\end{align*}
We pick orbit representatives \(\{1,2,3,4,8\}\). Each of the cosets corresponding to \(\{2, 3, 8\}\) are new and we denote them by \([t_1t_2],\ [t_1t_3],\ [t_1t_8]\), respectively. We have \([t_1^2]=[\star]\). By Relation (\ref{reln:given-3}) we have \([t_1t_4]=[t_1]\).

\subsection{Double Cosets of Length 2}

\subsubsection{\([t_1t_2]\)}

The point stabilizer and coset stabilizer are equal: \(\mathcal{N}^{1,2} = \mathcal{N}^{(1,2)}\) and are both equal to the one point stabilizer \(\mathcal{N}^1\). In particular, the orbits are the same as the length 1 case. Picking the same orbit representatives \(\{1,2,3,4,8\}\) we have:

We have:
\begin{enumerate}
	\item[\(t_1\)] By Relation (\ref{reln:12121}) we have \([t_1t_2t_1]=[t_1t_2]\).
	\item[\(t_2\)] We have \([t_1t_2^2] = [t_1]\).
	\item[\(t_3\)] Conjugate Relation (\ref{reln:given-3}) by \(x^{-2}y^{-2}x\) to get
	\[
	x^{-2}y^{-2}xt_2 = t_2t_3
	\]
	and so we have 
	\[
		t_1(t_2t_3) \sim t_1(x^{-2}y^{-2}t_2) \sim t_4t_2 \sim (t_1t_3)^{x^{-2}y^{-2}xy}.
	\]
	Thus \([t_1t_2t_3] = [t_1t_3]\).
	\item[\(t_4\)] The coset \([t_1t_2t_4]\) is new.
	\item[\(t_8\)] Conjugating Relation (\ref{reln:182329}) by \(yxy^2x^{-1}yx\) and \(xyx^{-1}y^2x\) to get the relations 
	\begin{align*}
		t_2t_8 &= xy^2xy^{-1}x^{-1}t_{25}t_{16}
		t_{22}t_{25} &= y^{-1}x^{-2}y^{-1}t_7t_6
	\end{align*}
	and Relation (\ref{reln:given-3}) by \(x^{-1}yx^3yx^2\) to get
	\[
	(x^{-2}yx^2)^3t_6 = t_6t_{16}
	\]
	which gives us
	\begin{align*}
		t_1(t_2t_8) &\sim t_1xy^2xy^{-1}x^{-1}t_{25}t_{16} \sim (t_{22}t_{25})t_{16} \\
		&\sim t_7(t_6t_{16})\sim t_7(x^{-2}yx^2)^3t_6 \\
		&\sim t_1t_6 \sim (t_1t_3)^{x^{-4}y^2}.
	\end{align*}
	Thus \([t_1t_2t_8] = [t_1t_6] = [t_1t_3]\).
\end{enumerate}
There is one new double coset given by \([t_1t_2t_4]\).

\subsubsection{\([t_1t_3]\)}

The point stabilizer \(\mathcal{N}^{1,3}\) and the coset stabilizer \(\mathcal{N}^{(1,3)}\) are equal. The orbits on \(X\) are
\begin{align*}
	&\{\{1\},\{2\},\{3\},\{4\},\{5,22,11,9\},\{6,17,14,7\},\{8,18,15,28\}, \\
	&\{10,23,12,27\},\{13,24,26,32,30,29,20,19,31,25,21,16\}\}
\end{align*}
We take \(\{1, 2, 3, 4, 5, 6, 8, 10, 13\}\) as orbit representatives. We have:
\begin{enumerate}
	\item[\(t_1\)] By Relation (\ref{reln:12121}) we have \([t_1t_3t_1] = [t_1t_3]\).
	\item[\(t_2\)] Conjugate Relation (\ref{reln:given-3}) by \( x^{-2}y^{-2}x\) to get
	\[
	x^5y^3t_3t_2 = t_3
	\]
	and so we have
	\[
		t_1(t_3t_2)\sim t_1x^5y^3t_3 \sim t_4t_3 \sim (t_1t_2)^{x^3y^{-1}}.
	\]
	Thus \([t_1t_3t_2] = [t_1t_2]\).
	\item[\(t_3\)] We have \([t_1t_3^2] = [t_1]\).
	\item[\(t_4\)] We have
	\begin{align*}
		t_1t_3(t_4) &= xyx^3y^{-1}xt_{31}(t_{28}t_{16})t_{26}t_5 & (\ref{reln:182329}) \\
		&= x^{-1}yx^{-1}yx^{-1}y^{-1}(t_{21}t_{12})t_{10}t_{26}t_5 &  (\ref{reln:given-3}) \\
		&= xy^{-1}xyx^{-2}y^{-1}x^{-1}t_{21}(t_{10}t_{26})t_5 &  (\ref{reln:given-3}) \\
		&= yt_8t_{10}t_5 &  (\ref{reln:given-3}) \\
	\end{align*}
	But then \( t_8t_{10}t_5 \in [t_1t_2t_4]\). 
	\item[\(t_5\)] Conjugate Relation (\ref{reln:182329}) by \(y^{-1}, x^3y^{-1}\), and \(yx^{-2}yx^{-1}y^2\) to get the relations:
	\begin{align*}
	x^{-3}y^{-1}t_{19}t_{14} &= t_3t_5 \\
	y^3x^3t_{25}t_{31} &= t_4t_{19} \\
	xyx^{-1}y^{-2}xt_{26}t_2 &= t_{25}t_{31}
	\end{align*}
	Conjugate Relation (\ref{reln:given-3}) by \(xy^2x^2\) to get
	\[
	(yxyx^{-1}y^{-1})^3t_2 = t_2t_{14}.
	\]
	Then we have
	\begin{align*}
		t_1(t_3t_5) &\sim t_1x^{-3}y^{-1}t_{19}t_{14} \sim (t_4t_{19})t_{14} \\
		&\sim (t_{25}t_{31})t_{14} \sim t_{26}(t_2t_{14}) \\
		&\sim t_{26}(yxyx^{-1}y^{-1})^3t_2 \sim t_{10}t_2 \\
		&\sim (t_1t_8)^{x^{-1}yx^3yx}.
	\end{align*}
	Thus we have \([t_1t_3t_5] = [t_1t_8]\).
	\item[\(t_6\)] Conjugate Relation (\ref{reln:given-3}) by \((yx^{-1}yxy^{-1})^3\), \(xy^2xy^{-1}xy^{-1}\), and \(yx^{-3}y^2\) to get
	\begin{align*}
	(yx^{-1}yxy^{-1})^3t_6 &= t_6t_2 \\
	yx^{-3}y^2t_{12}t_4 &= t_{12} \\
	xy^2xy^{-1}xy^{-1}t_{10} &= t_{10}t_6. \\
	\end{align*}
	Then conjugate Relation (\ref{reln:182329}) by \(y^3xy^{-1}x^{-2}\) and \(y^{-2}\) to get
	\begin{align*}
	y^3xy^{-1}x^{-2}t_9t_{12} &= t_{10}t_2 \\
	y^{-2}t_8 &= t_{22}t_{25}t_{12}.
	\end{align*}
	Finally, we compute
	\begin{align*}
		t_1t_3t_6 &\sim t_1t_3(t_6t_2)t_2 \sim t_1t_3(yx^{-1}yxy^{-1})^3t_6t_2 \\
		&\sim t_5(t_{10}t_6)t_2 \sim t_5xy^2xy^{-1}xy^{-1}t_{10}t_2 \\
		&\sim t_8(t_{10}t_2)\sim t_8 y^3xy^{-1}x^{-2}t_9t_{12} \\
		&\sim t_{30}t_9(t_{12}) \sim t_{30}t_9 yx^{-3}y^2t_{12}t_4 \\
		&\sim (t_{22}t_{25}t_{12})t_4 \sim t_8t_4 \\
		&\sim (t_1t_2)^{x^{-4}}.
	\end{align*}
	Thus \([t_1t_3t_6] = [t_1t_2]\).
	\item[\(t_8\)] Conjugate Relation (\ref{reln:given-3}) by \( x^{-1}y^2x^3y^{-1}\) to get
	\[
		(x^{-1}yx)^3t_3t_8 = t_3.
	\]
	Then we have
	\[
		t_1(t_3t_8)\sim t_1(x^{-1}yx)^3t_3 \sim t_6t_3 \sim (t_1t_8)^{y^2x^{-1}y^3x}.
	\]
	Thus \([t_1t_3t_8] = [t_1t_8]\).
	\item[\(t_{10}\)] We have
	\begin{align*}
		t_1t_3(t_{10}) &= y^3t_3t_1t_{10} &  (\ref{reln:given-3}) \\
		&= yx^{-1}yx^{-1}y^{-2}x^{-1} t_{10}(t_5t_3)t_6t_2 & (\ref{reln:182329}) \\
		&= x^{-1}yx^2y^{-2}t_{11}t_{26}(t_{19}t_6)t_2 & (\ref{reln:182329}) \\
		&= yx^{-2}y^{-1}xy^{-1} (t_{29}t_7t_{19})t_2 &  (\ref{reln:given-3}) \\
		&= x^{-1}y^3xyt_1t_2 & (\ref{reln:182329}).
	\end{align*}
	Thus \([t_1t_3t_{10}] = [t_1t_2]\).
	\item[\(t_{13}\)] Conjugate Relation (\ref{reln:182329}) by \(x^{-2}y^3\) to get
	\[
		xy^{-2}x^{-1}yx^{-1}t_{30}t_{24} = t_3t_{13}
	\]
	Conjugate Relation (\ref{reln:given-3}) by \(x^3y^{-2}x^{-1}yx^{-1}\) to get
	\[
	x^2yx^2y^{-1}t_{22} = t_{22}t_{30}.
	\]
	Combining these yields
	\begin{align*}
	t_1(t_3t_{13}) &\sim t_1xy^{-2}x^{-1}yx^{-1}t_{30}t_{24}\sim (t_{22}t_{30})t_{24}\\
	&\sim t_{22}t_{24}\sim (t_1t_3)^{x^{-2}y^{-1}x^{-1}}.
	\end{align*}
	Thus \([t_1t_3t_{13}] = [t_1t_3]\).
\end{enumerate}

\subsubsection{\([t_1t_8]\)}

For this coset the coset stabilizer is larger than the point stabilizer as by (\ref{reln:182329}) we have \([t_1t_8] = [t_{29}t_{23}]\). Hence the element
\begin{align*}
	\gamma_{18}&=(1, 29, 3, 16)(2, 30, 4, 13)(5, 17, 8, 23)(6, 22, 10, 18) \\
	&(7, 14, 27, 12)(9, 11, 28, 15)(19, 26, 25, 31)(20, 32, 24, 21) 
\end{align*}
is in \(\mathcal{N}^{(1,8)}\). In particular we have
\[
\mathcal{N}^{(1,8)}\geq \langle \mathcal{N}^{1,8},\gamma_{1,8}\rangle\cong 2\cdot A_4.
\]

The action of \(\mathcal{N}^{(1,8)}\) on \(X\) has orbits
\begin{align*}
	&\{\{1,3,29,16,25,31,19,26\},\{2,4,30,13,20,32,24,21\}, \\
	&\{5,8,17,23,14,27,12,7\},\{6,10,22,18,11,28,15,9\}\}
\end{align*}
We pick the representatives \(\{1, 2, 6, 8\}\). Each of the orbits goes back to a word of length 2 or 1 as we will see:
\begin{enumerate}
	\item[\(t_1\)] Conjugate Relation (\ref{reln:given-3}) by \( (yx^{-1}yxy^{-1})^3\), \(xy^2xy^{-1}xy^{-1}\), and \(x^5y^3\) to get
	\begin{align*}
	(yx^{-1}yxy^{-1})^3t_1t_5 &= t_1 \\
	xy^2xy^{-1}xy^{-1}t_5 &= t_5t_8 \\
	x^5y^3t_6&=t_5t_1.
	\end{align*}
	We compute
	\begin{align*}
		(t_1t_8)t_1 &\sim t_1(t_5t_8)t_1 \sim t_1xy^2xy^{-1}xy^{-1}t_5t_1 \\
		&\sim t_3(t_5t_1) \sim t_3 x^5y^3t_6 \\
		&\sim t_{10}t_5 \sim (t_1t_3)^{x^3yxy}.
	\end{align*}
	Thus \([t_1t_8t_1] = [t_1t_3]\).
	\item[\(t_2\)] Conjugate Relation (\ref{reln:182329}) by \(y^3x^{-1}yx\) to get
	\[
	x^2yx^{-1}yt_{22}t_{32} = t_8t_2
	\]
	and Relation (\ref{reln:given-3}) by \(x^{-1}y^{-1}x^{-1}yx^{-1}y\) to get
	\[
	x^2yx^2y^{-2}t_{30} = t_{30}t_{22}.
	\]
	Then we have
	\begin{align*}
		t_1(t_8t_2) &\sim t_1x^2yx^{-1}yt_{22}t_{32} \sim (t_{30}t_{22})t_{32} \\
		&\sim t_{30}t_{32} \sim (t_1t_8)^{xyxy^{-1}xy^{-1}x}.
	\end{align*}
	Thus \([t_1t_8t_2] = [t_1t_8]\).
	\item[\(t_6\)] Conjugate Relation (\ref{reln:182329}) by \(xy^{-2}x^{-1}yx^{-2}y\) and by \(xyxy^2\) to get
	\begin{align*}
		y^3x^3t_{26}t_{27} &=t_1t_8 \\
		(yxy^3x^2)^{-1}t_{26}t_3 &= t_{27}t_6.
	\end{align*}
	Conjugate Relation (\ref{reln:given-3}) by \(yxyx^{-2}yx\) to get
	\[
	(yxyx^{-1}y^{-1})^3t_{10} = t_{10}t_{26}.
	\]
	We compute
	\begin{align*}
		(t_1t_8)t_6 &\sim (t_{26}t_{27})t_6 \sim (t_{10}t_{26})t_3 \\
		&\sim t_{10}t_3 \sim (t_1t_3)^{x^{-1}yx^3yx}.
	\end{align*}
	\item[\(t_8\)] We have \([t_1t_8^2] = [t_1]\).
\end{enumerate}

\subsection{The Double Coset of Length 3}

The coset stabilizer is larger than the point stabilizer in this case:

\begin{lemma}\label{lem:N124}
	We have
	\[
	\mathcal{N}^{(1,2,4)}\geq \langle \mathcal{N}^{1,2,4}, \rho\rangle
	\]
	where
	\begin{align*}
		\rho & =(1, 24, 3, 20)(2, 19, 4, 25)(5, 15, 8, 11)(6, 12, 10, 14) \\
		&=(7, 22, 27, 18)(9, 17, 28, 23)(13, 26, 30, 31)(16, 21, 29, 32) 
	\end{align*}
\end{lemma}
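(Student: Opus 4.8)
The plan is to verify the two facts packaged into the statement: that $\rho$ lies in the control group $\mathcal{N} = 2^5:A_5$, and that $\rho$ fixes the coset $\mathcal{N}t_1t_2t_4$, so that it joins the point stabilizer $\mathcal{N}^{1,2,4}$ inside the coset stabilizer. The first is a finite check: I would exhibit $\rho$ as a word in the generators $x,y$ of $\mathcal{N}$ (equivalently, confirm that $\rho$ permutes the symmetric generators), after which membership in $\mathcal{N}^{(1,2,4)}$ is the only substantive point. Since $\mathcal{N}^{1,2,4}$ fixes each of the points $1,2,4$ it automatically fixes the coset, so once the stabilizer claim is established the inclusion $\langle \mathcal{N}^{1,2,4},\rho\rangle \le \mathcal{N}^{(1,2,4)}$ is immediate, and the enlargement is genuine precisely because $\rho$ moves $1$ (to $24$).

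For the stabilizer claim I would follow exactly the mechanism used for the element $\gamma_{1,8}$ in the length-$2$ analysis. Because $\rho$ permutes the symmetric generators, it carries the word $t_1t_2t_4$ to $t_{1^\rho}t_{2^\rho}t_{4^\rho} = t_{24}t_{19}t_{25}$; hence $\rho \in \mathcal{N}^{(1,2,4)}$ if and only if these two words name the same coset, i.e.
\[
\mathcal{N}t_1t_2t_4 = \mathcal{N}t_{24}t_{19}t_{25}.
\]
So it suffices to produce an element $\nu \in \mathcal{N}$ together with an identity $t_1t_2t_4 = \nu\, t_{24}t_{19}t_{25}$ holding in $\mathcal{G}$.

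I would obtain such an identity by the same rewriting technique that proves Relations~(\ref{reln:182329-word}) through (\ref{reln:12121}): starting from $t_1t_2t_4$, I repeatedly replace a short trailing subword by a control-group element times a relabelled tail, applying the collapsing relations (\ref{reln:given-3}), (\ref{reln:given-6}), (\ref{reln:182329}), (\ref{reln:231823}), (\ref{reln:22224}) and their derived consequences, while absorbing the accumulated permutations on the left into $\nu$. Each step either shortens the tail or pushes its indices toward $\{24,19,25\}$, and the procedure terminates when the tail reads $t_{24}t_{19}t_{25}$, at which point the leftover control-group word is the desired $\nu$.

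The main obstacle is purely combinatorial: there is no canonical rewriting, the applicable relation changes after every substitution, and by hand one must search for a chain that actually lands on the target indices rather than collapsing to a shorter coset. This is the step where a coset-enumeration computation, tracking the orbit of the coset stabilizer on words of length $3$, does the real work; conceptually, the appearance of the extra generator $\rho$ simply records the coincidence of two distinct length-$3$ words within the single double coset $[t_1t_2t_4]$.
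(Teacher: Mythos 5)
Your reduction is exactly the paper's: since $\rho$ permutes the symmetric generators and sends the word $t_1t_2t_4$ to $t_{24}t_{19}t_{25}$, membership of $\rho$ in $\mathcal{N}^{(1,2,4)}$ is equivalent to the single-coset equality $\mathcal{N}t_1t_2t_4=\mathcal{N}t_{24}t_{19}t_{25}$, i.e.\ to exhibiting $\nu\in\mathcal{N}$ with $t_1t_2t_4=\nu\,t_{24}t_{19}t_{25}$; and the containment $\langle\mathcal{N}^{1,2,4},\rho\rangle\le\mathcal{N}^{(1,2,4)}$ then follows because the point stabilizer trivially fixes the coset. That framing is correct and is precisely how the paper argues.

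The gap is that you never produce the identity; you only assert that the rewriting procedure will terminate at the target word, and you yourself flag that finding the chain is ``where the real work is done.'' But that chain \emph{is} the proof of this lemma: there is no a priori guarantee that $t_1t_2t_4$ and $t_{24}t_{19}t_{25}$ lie in the same single coset (if they did not, $\rho$ would simply fail to be a coset stabilizer and the orbit count in the Cayley graph would change), so an existence claim without a certificate proves nothing. The paper supplies the certificate explicitly: an eight-step derivation alternating applications of Relation~(\ref{reln:182329}) and Relation~(\ref{reln:given-3}), beginning with $t_1t_2t_4 = y^3xy^{-2}t_5t_6t_{19}t_{32}t_{25}$ and collapsing step by step to $t_1t_2t_4 = xyxyx^{-3}y^{-1}\,t_{24}t_{19}t_{25}$, which pins down $\nu=xyxyx^{-3}y^{-1}$. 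To complete your proof you must either exhibit such a chain or replace it by some other verifiable witness (e.g.\ a machine coset enumeration you actually carry out and report); as written, the central step is deferred rather than done. The surrounding observations (that $\rho\in\mathcal{N}$ is a finite check, and that $\rho\notin\mathcal{N}^{1,2,4}$ because it moves $1$) are fine.
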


\begin{proof}
We show \([t_1t_2t_4] = [t_{24}t_{19}t_{25}]\):
\begin{align*}
t_1t_2(t_4) &= y^3xy^{-2}t_5(t_6t_{19})t_{32}t_{25} & (\ref{reln:182329}) \\
&= y^{-1}xy^{-1}x^{-1}y^2x t_{24}t_6(t_{32})t_{25} &  (\ref{reln:given-3}) \\
&= y^{-1}xyx^{-2} yx t_{26}(t_{31}t_{15})t_1t_{19}t_{25} & (\ref{reln:182329}) \\
&= yx^{-3}y^{-1}x^{-1}y t_{32}t_{19}(t_9t_1)t_{19}t_{25} & (\ref{reln:182329}) \\
&= yx^{-1}y^{-2}xy (t_{23}t_{12})t_9t_{19}t_{25} &  (\ref{reln:given-3}) \\
&= y^{-1}x^3yx t_{28}(t_{25}t_9)t_{19}t_{25} & (\ref{reln:182329}) \\
&= xyxyx^2y^2 t_{24}t_{25}t_{19}t_{25} & (\ref{reln:182329}) \\
&= xyxyx^{-3}y^{-1} t_{24}t_{19}t_{25} &  (\ref{reln:given-3}) \\
\end{align*}
Thus the element \(\rho\) stabilizes the coset \([t_1t_2t_4]\). Since \([t_{24}t_{19}t_{25} = t_3t_4t_2]\) we have
\[
\mathcal{N}^{(1,2,4)}\geq  \langle\mathcal{N}^{1,2,4},\rho\rangle\cong 2^4:A_4.
\]
\end{proof}

By Lemma \ref{lem:N124} the coset stabilizer \(\mathcal{N}^{(1,2,4)}\) has just two orbits
\begin{align*}
	&\{\{1,3,2,24,4,20,19,13,29,30,21,32,16,25,26,31\}, \\
	&\{5,11,22,8,28,18,15,6,17,14,7,27,9,10,23,12\}\}
\end{align*}
We pick orbit representatives \(4,5\). The only possible new double coset is then \([t_1t_2t_4t_5]\) but we will see it is equal to \([t_1t_3]\). Start by conjugating Relation (\ref{reln:given-3}) by \(x^{-4}y^2\) and \(y^{-2}x^{-1}y^{-1}x\) to get
\begin{align*}
	(yx^{-1}yxy^{-1})^3t_1 &= t_1t_5 \\
	(xyx^{-1})^3t_6 &=t_6t_{19}.
\end{align*}
Then conjugate Relation (\ref{reln:182329}) by \(yx^{-1}yx^{-1}y^{-2}x^{-1}\), \(x^{-2}y^{-1}xy^{-1}x^{-2}\), and \(y^2x^{-1}yx^{-1}y^{-1}x^{-1}\) we get the relations
\begin{align*}
	yxy^{-1}x^{-2}y^{-2}t_7t_{19} &= t_{10}t_1 \\
	yxy^{-1}xy^{-1}x^{-1}yt_{21}t_{23} &= t_{30}t_7 \\
	y^{-1}xyx^{-1}yxy t_{25}t_6 &= t_{21}t_{23}.
\end{align*}
We compute
\begin{align*}
	(t_1t_2)t_4t_5 &\sim t_1t_2(t_1t_4)t_5 \sim t_1t_2(x^5y^3)^{-1}t_1t_5 \\
	&\sim t_4t_3(t_1t_5) \sim t_4t_3(yx^{-1}yxy^{-1})^3t_1 \\
	&\sim t_8(t_{10}t_1) \sim t_8 yxy^{-1}x^{-2}y^{-2}t_7t_{19} \\
	&\sim (t_{30}t_7)t_{19} \sim (t_{21}t_{23})t_{19} \\
	&\sim t_{25}(t_6t_{19}) \sim t_{25}(xyx^{-1})^3t_6 \\
	&\sim t_8t_6 \sim (t_1t_3)^{y^{-1}x^3yx}.
\end{align*}
Thus \([t_1t_2t_4t_5] = [t_1t_3]\).
This completes the double coset enumeration process.

\subsection{The Cayley Graph}

The Cayley Graph in Figure \ref{fig:cayley-graph} is a compact illustration of the double coset enumeration process. Each node is labelled by the corresponding double coset. The number inside the node is the number of single cosets determined by right multiplication by \(\mathcal{N}\). The edges indicate multiplication by the \(t_i\)s which have valency equal to the orbit size of the corresponding \(t_i\) under the coset stabilizer. 

\begin{figure}[h!]
\begin{tikzpicture}
\node[draw, circle, label=below:{\([\star]\)}]	at (0,0) (a)	{1};
\node[draw, circle, label=below:{\([t_1]\)}]	 at (2,0) (b)	{32};
\node[draw, circle, label=below:{\([t_1t_2]\)}] at (4,1) (c)	{32};
\node[draw, circle, label=below:{\([t_1t_2t_4]\)}] at (6,1) (d) {10};
\node[draw, circle, label=below:{\([t_1t_3]\)}] at (6,-1) (e) {160};
\node[draw, circle, label=below:{\([t_1t_8]\)}] at (4,-2) (f) {80};

\draw (a) -- (b) node[above, near end]{1} node[above, near start]{32};
\draw (b) -- (c) node[above, near end]{1} node[above, near start]{1};
\draw (b) -- (f) node[above, near end]{8} node[above, near start]{20};
\draw (b) -- (e) node[above, near end]{1} node[above, near start]{25};
\draw (c) -- (e) node[above, near end]{4} node[above, near start]{1};
\draw (e) -- (f) node[above, near end]{8} node[above, near start]{16};
\draw (c) -- (d) node[above, near end]{16} node[above, near start]{5};
\path (b) edge [out=75,in=105,looseness=10] node[above] {5} (b);
\path (c) edge [out=75,in=105,looseness=10] node[above] {1} (c);
\path (f) edge [out=165,in=195,looseness=10] node[above] {8} (f);
\path (e) edge [out=10,in=340,looseness=10] node[above, right] {17} (e);
\path (e) edge [bend right=60] node[right, near start]{1}  node[right, near end]{16} (d);
\end{tikzpicture}
\label{fig:cayley-graph}
\caption{They Cayley Graph}
\end{figure}

\section{The isomorphism \(\mathcal{G}\cong J_2\)}\label{sec:main}

\subsection{Simplicity of \(\mathcal{G}\)}

\begin{lemma}
	The order of \(\mathcal{G}\) is precisely \(604800\).
	\label{lem:order}
\end{lemma}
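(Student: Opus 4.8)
The plan is to extract the order directly from the completed double coset enumeration of \S\ref{sec:dce} and then confirm it with a matching lower bound. The group \(\mathcal{G}\) acts by right multiplication on the single cosets \(\mathcal{N}\omega\), the stabilizer of the trivial coset \(\mathcal{N}e\) contains \(\mathcal{N}\), and the action is transitive; hence the total number of distinct single cosets equals \([\mathcal{G}:\mathcal{N}]\) and \(|\mathcal{G}| = |\mathcal{N}|\cdot[\mathcal{G}:\mathcal{N}]\). The Cayley graph (Figure \ref{fig:cayley-graph}) records exactly this data: each node is a double coset \(\mathcal{N}\omega\mathcal{N}\), and its label is the number \([\mathcal{N}:\mathcal{N}^{(\omega)}]\) of single cosets it contains.

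First I would establish the upper bound. The enumeration is complete in the sense that, for every node and every orbit of its coset stabilizer on \(X\), right multiplication by the corresponding symmetric generator returns a coset already lying in one of the six listed double cosets; this is precisely what the edge data of the graph and the preceding lemmas are arranged to certify. Consequently \(\mathcal{G}\) permutes the finite set of single cosets appearing in the graph. Since each displayed coset stabilizer is a subgroup we have explicitly exhibited inside the true stabilizer (for instance the \(2\cdot A_4\) at \([t_1t_8]\) and the \(2^4:A_4\) of Lemma \ref{lem:N124} at \([t_1t_2t_4]\)), each node label is an upper bound for the number of single cosets there. Summing the labels gives
\[
1+32+32+10+160+80 = 315,
\]
and as \(|\mathcal{N}| = |2^5:A_5| = 1920\) we obtain \(|\mathcal{G}| \le 1920\cdot 315 = 604800\).

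For the reverse inequality I would exhibit \(J_2\) as a homomorphic image of \(\mathcal{G}\). Proposition \ref{prop:sym-gen-exist} produces a symmetric generating set for \(J_2\) with the same control group \(\mathcal{N}\) and the same action of \(x,y\), hence a surjection from the progenitor \(\mathcal{P}\) onto \(J_2\). It then suffices to verify that the two defining relations \((\tau t_1)^3\) and \((\pi t_2)^6\) hold among these generators in \(J_2\); granting this, the surjection factors through \(\mathcal{G}\), so \(|\mathcal{G}| \ge |J_2| = 604800\). Combining the two bounds forces \(|\mathcal{G}| = 604800\), and as a by-product shows every node label is exact and that the control group \(\mathcal{N}\) embeds faithfully in \(\mathcal{G}\).

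The main obstacle is guaranteeing completeness of the enumeration: the upper bound fails only if some product of a listed coset with a generator escapes the six double cosets, producing an undetected new node (a larger-than-claimed stabilizer would merely shrink the count and is harmless for the upper bound). I would close this by checking, orbit representative by orbit representative at each node, that every such product is accounted for by the relations (\ref{reln:given-3}), (\ref{reln:given-6}), (\ref{reln:182329}), and (\ref{reln:12121}) derived in \S\ref{sec:dce}; this traversal is the bulk of the verification, with the lower-bound relation check in \(J_2\) a comparatively routine computation.
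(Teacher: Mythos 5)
Your proof is correct and takes essentially the same route as the paper: the upper bound $|\mathcal{G}|\le 1920\cdot 315 = 604800$ from the completed double coset enumeration, and the lower bound from Proposition \ref{prop:sym-gen-exist} realizing $J_2$ as a homomorphic image of $\mathcal{G}$. Your version is more explicit than the paper's about what must actually be checked (completeness of the enumeration, and that the two relations hold among the generators inside $J_2$), but the underlying argument is the same.
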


\begin{proof}
	By Proposition \ref{prop:sym-gen-exist}, the order of \(\mathcal{G}\) is bounded below by \(604800\). The double coset enumeration process in Section \ref{sec:dce} shows that the order of \(\mathcal{G}\) is bounded above by \(604800\). Hence, it is exactly \(604800\).
\end{proof}

\begin{lemma}
	The group \(\mathcal{G}\) is perfect.
	\label{lem:perfect}
\end{lemma}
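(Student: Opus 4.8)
The plan is to show directly that the abelianization \(\mathcal{G}^{\mathrm{ab}} = \mathcal{G}/[\mathcal{G},\mathcal{G}]\) is trivial. Because \(\mathcal{G}\) is symmetrically generated we have \(\mathcal{G} = \langle t_1,\ldots,t_{32}\rangle\), and since \(\mathcal{N}\) permutes the symmetric generators transitively (Proposition \ref{prop:sym-gen-exist}) the \(t_i\) are all conjugate in \(\mathcal{G}\). Hence they collapse to a single image \(\bar t\) in \(\mathcal{G}^{\mathrm{ab}}\), so \(\mathcal{G}^{\mathrm{ab}} = \langle \bar t\rangle\) is cyclic with \(\bar t^2 = 1\). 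The entire problem therefore reduces to proving \(\bar t = 1\).

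I would obtain this from three relations already proved during the double coset enumeration, pushing each into \(\mathcal{G}^{\mathrm{ab}}\), where every \(t_i\mapsto \bar t\), every square \(t_i^2\) disappears, and each element of \(\mathcal{N}\) maps to the corresponding word in the images \(\bar x,\bar y\) of the control-group generators. Relation (\ref{reln:12121}), namely \(t_1t_2t_1t_2t_1 = x^5\), gives \(\bar t = \bar t^5 = \bar x^5\). Relation (\ref{reln:182329}), namely \(t_1t_8t_{23}t_{29} = y^{-1}x^{-3}\), gives \(1 = \bar t^4 = \bar y^{-1}\bar x^{-3}\), hence \(\bar y = \bar x^{-3}\). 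Finally Relation (\ref{reln:given-3}), \(\tau = t_1t_4t_1\), together with the definition \(\tau = x^5y^3\), gives \(\bar t = \bar t^3 = \bar\tau = \bar x^5\bar y^3\). Combining the first and third relations yields \(\bar y^3 = 1\), while the second yields \(\bar y^3 = \bar x^{-9}\), so \(\bar x^9 = 1\). Since \(x\) has order \(10\) (immediate from its cycle type in Proposition \ref{prop:sym-gen-exist}) we also have \(\bar x^{10} = 1\), and as \(\gcd(9,10)=1\) this forces \(\bar x = 1\). Then \(\bar t = \bar x^5 = 1\), so \(\mathcal{G}^{\mathrm{ab}} = \langle\bar t\rangle\) is trivial and \(\mathcal{G}\) is perfect.

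The only delicate point is the bookkeeping: I must verify that each relation is transported to the abelianization correctly, reading the control-group words \(x^5\), \(y^{-1}x^{-3}\), and \(x^5y^3\) off directly from their definitions, and that the order of \(x\) really is \(10\). I expect the main obstacle to be a temptation rather than a genuine computation. One is drawn to argue instead that \(\mathcal{N} = 2^5{:}A_5\) is perfect and invoke \(\mathcal{N}^{\mathrm{ab}} = 1\), but the \(A_5\)-module structure on \(2^5\) (in particular the fate of its fixed points) makes that route fiddly and easy to get wrong. The approach above sidesteps it entirely, relying only on the explicit consequences of the two defining relators together with the arithmetic fact \(\gcd(9,10)=1\), which does all the real work.
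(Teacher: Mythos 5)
Your proof is correct, and it takes a genuinely different route from the paper's. The paper argues inside \(\mathcal{G}'\) directly: it invokes the structural facts that \(x^5\) generates the center of \(2^5{:}A_5\) and that the quotient \((2^5{:}A_5)/\langle x^5\rangle\) is perfect, so that \(\mathcal{N}'\) has index \(2\) in \(\mathcal{N}\) and \(\tau\in\mathcal{N}'\leq\mathcal{G}'\); Relation (\ref{reln:given-3}) then forces \(t_4\in\mathcal{G}'\) (hence all \(t_i\) by transitivity and normality of \(\mathcal{G}'\)), and Relation (\ref{reln:12121}) puts \(x^5\) into \(\mathcal{G}'\), which together with \(\mathcal{N}=\mathcal{N}'\langle x^5\rangle\) finishes the job. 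You instead work entirely in the abelianization and replace the module-theoretic input about \(2^5{:}A_5\) with a third relation, (\ref{reln:182329}), plus the arithmetic \(\gcd(9,10)=1\); this is more self-contained and avoids exactly the point you identify as fiddly (whether \(\tau\) and \(x^5\) lie in \(\mathcal{N}'\)). One small caveat: your opening reduction ``\(\mathcal{G}^{\mathrm{ab}}=\langle\bar t\rangle\)'' quietly assumes the \(t_i\) alone generate \(\mathcal{G}\), which for the abstract quotient of the progenitor is not given a priori (the progenitor is generated by the \(t_i\) \emph{together with} \(\mathcal{N}\)); Proposition \ref{prop:sym-gen-exist} only establishes this inside \(J_2\). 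Fortunately your computation is self-correcting: you derive \(\bar x=1\) and \(\bar y=\bar x^{-3}=1\) outright, so the images of the control-group generators also vanish and \(\mathcal{G}^{\mathrm{ab}}=\langle\bar t,\bar x,\bar y\rangle\) is trivial regardless. With that sentence added, your argument is complete and arguably cleaner than the paper's.
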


\begin{proof}
	It suffices to show \(2^{1+4}:A_5\leq \mathcal{G}'=[\mathcal{G},\mathcal{G}]\) and \(t_1\in\mathcal{G}'\). It is well known that \( x^5\) generates the center of \(2^{1+4}:A_5\) and the quotient by \( (x^5)\) is perfect. By Relation \ref{reln:given-3} we have \([t_1,t_4] = \tau t_4\). Hence, \(t_4\in\mathcal{G}'\). Since \(\mathcal{G}\) is transitive we have \(t_1\in\mathcal{G}\). By Relation \ref{reln:12121} we have \([t_1,t_2]t_1 = x^5\in \mathcal{G}'\). Thus \(\mathcal{G}' = \mathcal{G}\).
\end{proof}

Let us consider the normal Abelian subgroup of the stabilizer of \([\star]\) given by \(\mathcal{H}=2^{1+4}\).

\begin{lemma}
	The conjugates of \(\mathcal{H}\) under the action of \(\mathcal{G}\) generate \(\mathcal{G}\).
	\label{lem:generates}
\end{lemma}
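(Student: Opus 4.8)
The plan is to show that the normal closure of $\mathcal{H} = 2^5$ in $\mathcal{G}$ is all of $\mathcal{G}$, by exhibiting enough generators of $\mathcal{G}$ inside the subgroup generated by $\mathcal{G}$-conjugates of $\mathcal{H}$. Let $K = \langle \mathcal{H}^g : g \in \mathcal{G}\rangle$ be this normal closure. Since $K$ is normal in $\mathcal{G}$ and $\mathcal{H} \leq K$, it suffices to prove that the symmetric generators $t_i$ all lie in $K$, because by condition (c) of symmetric generation these generate $\mathcal{G}$.

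**First** I would observe that $\mathcal{H} = 2^5$ is normal in the control group $\mathcal{N} = 2^5{:}A_5$, and that its conjugates under $\mathcal{N}$ already include $\mathcal{H}$ itself. The real content is to produce a single symmetric generator, say $t_1$, inside $K$; then since $\mathcal{N} \leq K$ would follow (as $K \supseteq \mathcal{H}$ and, once some $t_i \in K$, the $A_5$-part can be recovered), transitivity of $\mathcal{N}$ on the $t_i$ gives all of them. Concretely, I would look for an element of $\mathcal{H}$ that can be rewritten, using the relations established in Section~\ref{sec:dce}, as a short word involving a symmetric generator; Relation~(\ref{reln:given-3}), $\tau = t_1 t_4 t_1$ with $\tau \in \mathcal{N}$, is the natural candidate, and Relation~(\ref{reln:12121}), $t_1t_2t_1t_2t_1 = x^5$, ties a symmetric generator word directly to the central element $x^5$ of $\mathcal{H}$.

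**The key step** is to exploit that $x^5$ generates the center of $2^5{:}A_5$ and lies in $\mathcal{H}$. Since $x^5 = t_1t_2t_1t_2t_1 \in K$ whenever the $t_i \in K$, and conversely the $\mathcal{G}$-conjugates of $x^5$ generate a nontrivial normal subgroup, I would argue that $K$ is a nontrivial normal subgroup of $\mathcal{G}$. Combined with Lemma~\ref{lem:perfect} (that $\mathcal{G}$ is perfect) and the forthcoming simplicity argument via Iwasawa's criterion, this forces $K = \mathcal{G}$: a nontrivial normal subgroup of a simple group is the whole group. Indeed the cleanest route is to defer to Iwasawa — $\mathcal{H}$ is the abelian normal subgroup of the point stabilizer whose conjugates generate $\mathcal{G}$, which is precisely the hypothesis this lemma is establishing for that criterion.

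**The main obstacle** I anticipate is a potential circularity: Iwasawa's criterion itself requires exactly the conclusion of this lemma as one of its hypotheses, so I must prove that the conjugates generate $\mathcal{G}$ \emph{without} invoking simplicity. I would therefore make the argument self-contained by working directly: show $t_1 \in K$ by expressing it through the established relations linking $\mathcal{H}$-elements to symmetric-generator words, then invoke transitivity of $\mathcal{N}$ and condition (c) to conclude $\mathcal{G} = \langle t_i \rangle \leq K$. The delicate point is verifying that the specific element of $\mathcal{H}$ we extract genuinely forces a $t_i$ into $K$ rather than merely an element of $\mathcal{N}$; pinning down that rewriting, using Relations~(\ref{reln:given-3}) and~(\ref{reln:12121}), is where the real care is needed.
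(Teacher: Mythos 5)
Your plan follows the paper's proof in essentially the same way: the paper lets \(\mathcal{K}\) be the subgroup generated by the \(\mathcal{G}\)-conjugates of \(\mathcal{H}\), notes \(x^5=t_1t_2t_1t_2t_1\in\mathcal{H}\leq\mathcal{K}\) by Relation (\ref{reln:12121}), and then conjugates by \(t_1t_2\) to get \((x^5)^{t_1t_2}=t_2t_1(t_1t_2t_1t_2t_1)t_1t_2=t_1\in\mathcal{K}\), after which transitivity of \(\mathcal{N}\) on the symmetric generators and condition (c) finish the argument. The one step you leave open --- how to extract an actual \(t_i\) from \(x^5\) rather than merely an element of \(\mathcal{N}\) --- is exactly this one-line conjugation, and your instinct to argue directly rather than circularly through Iwasawa's criterion matches what the paper does.
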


\begin{proof}
	Let \(\mathcal{K}\) be the subgroup of \(\mathcal{G}\) generated by the conjugates of \(\mathcal{H}\). By Relation \ref{reln:12121}, we have \(t_1t_2t_1t_2t_1=x^5\in\mathcal{H}\) so that by conjugating by \(t_1t_2\) yields \(t_1\in\mathcal{K}\). Since the action of \(2^{1+4}:A_5\) on \(X\) is transitive we have that all of the symmetric generators are in \(\mathcal{K}\). It follows that we have \(A_5\) is a subgroup of \(\mathcal{K}\) as well. This completes the proof.
\end{proof}

\begin{lemma}
	The group \(\mathcal{G}\) acts primitively on the set of single cosets \(\Omega\).
\end{lemma}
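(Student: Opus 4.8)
The plan is to establish primitivity through the standard block criterion. Since $\mathcal{G}$ acts transitively on the set $\Omega$ of right cosets of $\mathcal{N}$ by right multiplication, with point stabiliser $\mathcal{N}$, this action is primitive precisely when $\mathcal{N}$ is a maximal subgroup of $\mathcal{G}$, equivalently when $\Omega$ admits no nontrivial block system. I would therefore argue by contradiction: assume a nontrivial block system exists and derive a numerical impossibility from the data already recorded in the Cayley graph of Figure \ref{fig:cayley-graph}.

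First I would fix the block $B$ containing the trivial coset $[\star] = \mathcal{N}e$. Because $\mathcal{N} = \mathcal{G}_{[\star]}$ fixes the point $[\star] \in B$ and permutes the blocks among themselves, it must stabilise $B$ setwise; hence $B$ is a union of $\mathcal{N}$-orbits on $\Omega$. These orbits are exactly the double cosets enumerated in \S\ref{sec:dce}, and their sizes are the integers labelling the nodes of the Cayley graph, namely $1, 32, 32, 10, 160, 80$ for $[\star], [t_1], [t_1t_2], [t_1t_2t_4], [t_1t_3], [t_1t_8]$ respectively. Since the orbit of $[\star]$ itself contributes the summand $1$, we must have $|B| = 1 + s$, where $s$ is the sum of some subset of $\{10, 32, 32, 80, 160\}$.

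Next I would invoke the fact that in a transitive action all blocks of a fixed system have equal size, so $|B|$ divides $|\Omega| = |\mathcal{G}|/|\mathcal{N}| = 604800/1920 = 315 = 3^2\cdot 5\cdot 7$, where the order of $\mathcal{G}$ comes from Lemma \ref{lem:order}. The heart of the argument is then a finite check: running over all subsets of $\{10, 32, 32, 80, 160\}$, none of the resulting values $1 + s$ lying strictly between $1$ and $315$ is a divisor of $315$. Consequently $B$ must be either $\{[\star]\}$ or all of $\Omega$, both trivial, contradicting the assumed nontrivial block system; hence the action is primitive. The only genuinely delicate point is that the argument rests entirely on the suborbit sizes being read off correctly from the completed double coset enumeration; granting those, the divisibility obstruction is purely arithmetic and admits no loopholes, so I expect no real difficulty beyond confirming that the subset-sum verification is exhaustive.
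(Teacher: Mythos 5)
Your argument is correct, and it reaches the conclusion by a route that overlaps with but is not identical to the paper's. Both proofs begin the same way: the block $\mathcal{B}$ containing $[\star]$ is stabilized setwise by $\mathcal{N}$, hence is a union of $\mathcal{N}$-suborbits, which are exactly the double cosets with the sizes $1,32,32,10,160,80$ recorded in Figure \ref{fig:cayley-graph}. Where you diverge is in how a nontrivial union is excluded. The paper argues structurally for the suborbits of length $\leq 2$: each of $[t_1],[t_1t_2],[t_1t_3],[t_1t_8]$ carries a loop in the Cayley graph, i.e.\ some $t_i$ sends a coset of that double coset back into the same double coset, so if $\mathcal{B}$ contained such a suborbit its setwise stabilizer would contain $\langle \mathcal{N}, t_i\rangle = \mathcal{G}$ and $\mathcal{B}$ would be all of $\Omega$; only for the loop-free node $[t_1t_2t_4]$ does the paper fall back on the divisibility of $|\mathcal{B}|$ into $315$. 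You instead run the divisibility obstruction uniformly over all $2^5$ subsets of $\{10,32,32,80,160\}$, and the check does close: one needs $1+s \in \{3,5,7,9,15,21,35,45,63,105\}$, i.e.\ $s \in \{2,4,6,8,14,20,34,44,62,104\}$, and no sum of the form $10a+32b+80c+160d$ with $a,c,d\in\{0,1\}$, $b\in\{0,1,2\}$ lands in that set. Your version is purely arithmetic and needs nothing from the Cayley graph beyond the suborbit sizes (whose exactness is guaranteed by Lemma \ref{lem:order}, which you correctly invoke and which precedes this lemma, so there is no circularity); the paper's version needs less arithmetic but more structural input (the presence of loops). Both are sound; yours is arguably the cleaner write-up, since the paper's "stabilized by a $t_i$" step is left rather terse.
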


\begin{proof}
	Let \(\mathcal{B}\) be a non-trivial block in \(\Omega\). By multiplying by \(t_i\)'s we can assume that \(\mathcal{N}\) is in \(\Omega\). Thus the action of \(\mathcal{N}\) on \(\Omega\) preserves \(\mathcal{B}\). Since \(\mathcal{B}\) is nontrivial there is a coset of the form \(\mathcal{N}\omega\) in \(\mathcal{B}\) with \(\mathcal{N}\omega\neq \mathcal{N}\). 
	
	Suppose the length of \(\omega\) is less than 3. Then since \(\mathcal{B}\) is stabilized by \(\mathcal{N}\) it must be that \(\mathcal{B}\) has all double cosets in that orbit. Each of the double cosets is stabilized by a \(t_i\) so that \(\mathcal{B}\) is stabilized by \(t_i\) and thus it must be trivial. Finally, if \(\omega\) is length 3 then there must be another double coset of length two or one inside of \(\mathcal{B}\) as the size of the block must divide the order of the group.
\end{proof}

\begin{theorem}
	There is an isomorphism \(\mathcal{G}\cong J_2\).
	\label{thm:main}
\end{theorem}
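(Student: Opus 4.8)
The plan is to invoke Iwasawa's simplicity criterion, whose hypotheses have been arranged precisely by the four preceding lemmas. I would use the criterion in the following form: if a perfect group \(G\) acts primitively on a set \(\Omega\) and the stabilizer of a point possesses an abelian normal subgroup whose \(G\)-conjugates generate \(G\), then every normal subgroup of \(G\) is either contained in the kernel of the action or equals \(G\). Here \(G = \mathcal{G}\) acts on the set \(\Omega\) of single cosets \(\mathcal{N}\omega\); the stabilizer of the base point \([\star] = \mathcal{N}\) is exactly \(\mathcal{N} = 2^5:A_5\), and inside it the elementary abelian subgroup \(\mathcal{H} = 2^5\) is normal and abelian. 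Lemma \ref{lem:perfect} supplies perfectness, the preceding lemma supplies primitivity, and Lemma \ref{lem:generates} states that the \(\mathcal{G}\)-conjugates of \(\mathcal{H}\) generate \(\mathcal{G}\), so every hypothesis is in place.

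First I would run the standard Iwasawa argument to obtain the dichotomy. Let \(N\) be a normal subgroup of \(\mathcal{G}\) not contained in the kernel \(K\) of the action on \(\Omega\); since \(N\) then acts nontrivially on the primitive set \(\Omega\), it acts transitively, whence \(\mathcal{G} = N\mathcal{N}\). Writing any \(g\in\mathcal{G}\) as \(g = n\nu\) with \(n\in N\) and \(\nu\in\mathcal{N}\), and using that \(N\) is normal while \(\mathcal{H}\trianglelefteq\mathcal{N}\), one checks that each conjugate \(\mathcal{H}^g = (\mathcal{H}^n)^\nu\) lies in \(N\mathcal{H}\); since these conjugates generate \(\mathcal{G}\) by Lemma \ref{lem:generates}, we get \(\mathcal{G} = N\mathcal{H}\). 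Hence \(\mathcal{G}/N \cong \mathcal{H}/(\mathcal{H}\cap N)\) is abelian, and perfectness (Lemma \ref{lem:perfect}) forces \(N = \mathcal{G}\). Consequently every normal subgroup of \(\mathcal{G}\) is contained in \(K\) or equals \(\mathcal{G}\); equivalently, \(\mathcal{G}/K\) is simple.

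It remains to identify \(\mathcal{G}\) with \(J_2\) and, along the way, to settle the only delicate point, namely the triviality of \(K\) (faithfulness of the action). The kernel \(K\) is the core of \(\mathcal{N}\) in \(\mathcal{G}\), so \(K\leq\mathcal{N}\subsetneq\mathcal{G}\). On the other hand, the lower bound in Lemma \ref{lem:order} is obtained from an epimorphism \(\mathcal{G}\twoheadrightarrow J_2\): the symmetric generating set of Proposition \ref{prop:sym-gen-exist}, together with the fact that the defining relations hold in \(J_2\), realizes \(J_2\) as an image of \(\mathcal{G}\). Its kernel is a normal subgroup of \(\mathcal{G}\); by the dichotomy it is either all of \(\mathcal{G}\), which is impossible, or contained in \(K\subsetneq\mathcal{G}\). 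Since \(|\mathcal{G}| = |J_2| = 604800\) by Lemma \ref{lem:order}, this kernel has order \(1\), so the epimorphism is an isomorphism \(\mathcal{G}\cong J_2\); in particular \(K = 1\) and \(\mathcal{G}\) is simple. Alternatively, one may first argue \(K=1\) to conclude that \(\mathcal{G}\) is simple, and then identify \(\mathcal{G}\) with \(J_2\) by citing that \(J_2\) is the unique simple group of order \(604800\).

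The step I expect to require the most care is exactly the triviality of \(K\): a priori Iwasawa yields only simplicity of \(\mathcal{G}/K\), and one must rule out a nontrivial core sitting inside \(\mathcal{N}\). The cleanest resolution is the order comparison above, which simultaneously proves \(K=1\) and produces the isomorphism; this is why it is convenient to keep the explicit epimorphism onto \(J_2\) from Lemma \ref{lem:order} in hand rather than arguing directly about which normal subgroups of \(2^5:A_5\) can be normal in \(\mathcal{G}\).
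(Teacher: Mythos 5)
Your proposal follows the same route as the paper: Iwasawa's criterion applied to the primitive action on the single cosets, combined with the order count of Lemma \ref{lem:order} and the uniqueness of the simple group of order \(604800\). The only difference is that you explicitly handle the triviality of the kernel of the action on \(\Omega\) (a point the paper's one-line proof leaves implicit), and your resolution of it via the epimorphism onto \(J_2\) underlying the lower bound in Lemma \ref{lem:order} is correct.
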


\begin{proof}
	By Iwasawa's simplicity criterion and Lemmas \ref{lem:perfect} and \ref{lem:generates} we know that \(\mathcal{G}\) is simple. By Lemma \ref{lem:order} we know that \(\mathcal{G}\) is of order \(604800\). Since \(J_2\) is the unique simple group of that order \cite{hall-wales-69,ATLAS}, we have \(\mathcal{G}\cong J_2\).
\end{proof}

\subsection{Maximal Subgroups of \(\mathcal{G}\)}


There are nine conjugacy classes of maximal subgroups of \(J_2\), \cite{ATLAS}. In Table \ref{table:max-subgroups}, we give the corresponding generators for the maximal subgroups \(\mathcal{M}\) in \(\mathcal{G}\) in terms of our presentation.

\begin{table}[h!]
\label{table:max-subgroups}
\caption{Maximal Subgroups of \(\mathcal{G}\)}
\begin{tabular}{|c|c|} \hline
\(\mathcal{M}\) & Generators \\ \hline
\(U_3(3)\) & \(t^{xy^5(x^8)^{y^5x^9}}, y^t\) \\ \hline
\(3.A_6.2\) &  \(y^2  x^9  y  t  y  t  y^5  x  y^2,x  y^5  x  y^5  t  x  t  y  t  y  x\) \\ \hline
\(2^{1+4}:A_5\) & \(x,y\) \\ \hline
\(2^{2+4}:(3\times S_3) \) & \( t^{y^3x^9y^4x^9}, tyxyt\) \\ \hline
\(A_4\times A_5\) & \(y  x^9  y^2  t  y  t  x  y^5,x^2  y  t  x^{10}  y^5  t  x^8  y  x^8,
x^9  y  x^9  y  x^9  t  y  x  t  y^5,x^2  y^5  x  t  y^5  t  x  t  y  t
\) \\ \hline
\(A_5\times D_{10}\) & \(y^2,t,x^t\)\\ \hline
\(L_3(2):2\) & \( y^2  x^9  y  x^9  t  x^9  y  x^9  y^5  t  y^5,
t  y  x^2  y  t  y^5  x  y^5  x^8
\)\\ \hline
\(5^2:D_{12}\) & \(t^{y^5 x^9 y^5 t  x^9},
x  y  t  x  y  x^8  y  x  t  x  t  y^5,
x  t  y  x^9  y^5  t  x^9  y^5  x^9  t
\) \\ \hline
\(A_5\) & \(ytx(y^5x^8)^2y^{tx^4}, x^{y^5}t^{y^4}xt^{y^5x}t\) \\ \hline
\end{tabular}
\end{table}

\subsection{Connection with Janko's Conjecture}

Janko originally conjectured the existence of a simple group with involution centralizer of type \(2^{1+4}:A_5\). We have established \(\mathcal{G}\) is simple in the proof of Theorem \ref{thm:main}. 

\begin{theorem}
	The involution \(x^5\) is centralized by \(2^{1+4}:A_5\).
\end{theorem}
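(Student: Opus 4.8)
The plan is to prove the displayed containment $\mathcal{N}\leq C_{\mathcal{G}}(x^5)$, where $\mathcal{N}=\langle x,y\rangle$ is the control group, and then to upgrade it to the full statement of Janko's conjecture, namely $C_{\mathcal{G}}(x^5)=\mathcal{N}$. First I would confirm that $x^5$ is a genuine involution. From the explicit permutation in Proposition~\ref{prop:sym-gen-exist}, $x$ is a product of one transposition and three disjoint $10$-cycles, so $x$ has order $10$ and hence $x^5$ has order exactly $10/\gcd(5,10)=2$. Equivalently, Relation~\eqref{reln:12121} writes $x^5=t_1t_2t_1t_2t_1$, a palindrome of odd length in the involutions $t_1,t_2$, from which $(x^5)^2=e$ follows after cancelling the repeated generators from the middle outward.

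Next I would establish the containment $\mathcal{N}\leq C_{\mathcal{G}}(x^5)$. Since $x$ certainly commutes with $x^5$ and $\mathcal{N}=\langle x,y\rangle$, it suffices to check that $[y,x^5]=1$. This is precisely the assertion, already invoked in the proof of Lemma~\ref{lem:perfect}, that $x^5$ generates the centre of $\mathcal{N}$; I would verify it by computing $x^5$ explicitly as a permutation on the $32$ letters from the cycle form of $x$ and confirming $x^5y=yx^5$ directly. This permutation check is the only genuine obstacle, and it is entirely routine.

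Finally, to identify the full centraliser and thereby recover Janko's conjecture, I would argue that $\mathcal{N}$ is a maximal subgroup of $\mathcal{G}$. Indeed $\mathcal{N}$ is the stabiliser of the base coset $[\star]=\mathcal{N}e$ in the transitive action of $\mathcal{G}$ on the single cosets $\Omega$, and this action was shown above to be primitive, so the point stabiliser $\mathcal{N}$ is maximal; alternatively one may cite that $\mathcal{N}=\langle x,y\rangle$ is the maximal subgroup $2^{1+4}{:}A_5$ recorded in Table~\ref{table:max-subgroups}. Consequently the chain $\mathcal{N}\leq C_{\mathcal{G}}(x^5)\leq\mathcal{G}$ forces $C_{\mathcal{G}}(x^5)$ to equal either $\mathcal{N}$ or all of $\mathcal{G}$. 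Since $\mathcal{G}\cong J_2$ is simple and non-abelian by Theorem~\ref{thm:main}, its centre is trivial, so $x^5\notin Z(\mathcal{G})$ and $C_{\mathcal{G}}(x^5)\neq\mathcal{G}$. Hence $C_{\mathcal{G}}(x^5)=\mathcal{N}$, an involution centraliser of order $1920$ of exactly the type $2^5{:}A_5$ predicted by Janko.
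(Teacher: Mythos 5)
Your argument is correct, and it actually reaches the stronger conclusion that the paper's proof also establishes --- namely that \(C_{\mathcal{G}}(x^5)\) is \emph{exactly} \(\mathcal{N}=\langle x,y\rangle\cong 2^5{:}A_5\) --- but by a genuinely different route. The containment \(\mathcal{N}\leq C_{\mathcal{G}}(x^5)\) is treated essentially the same way in both (the paper simply asserts it; you reduce it to centrality of \(x^5\) in \(\mathcal{N}\), checkable on the faithful \(32\)-point action). The difference lies in the reverse inclusion. The paper exploits the double coset enumeration directly: every element of \(\mathcal{G}\) has the form \(\pi\omega\) with \(\pi\in\mathcal{N}\) and \(\omega\) a word of length at most \(3\), and centralizing \(x^5\) forces \(\omega=\omega^{x^5}\), which is then ruled out because \(x^5\) is fixed-point-free on the \(32\) letters. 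You instead invoke primitivity of the \(315\)-point coset action (already proved), which makes the point stabilizer \(\mathcal{N}\) maximal, so that \(C_{\mathcal{G}}(x^5)\) is either \(\mathcal{N}\) or \(\mathcal{G}\), and the latter is excluded because \(\mathcal{G}\cong J_2\) is non-abelian simple and hence has trivial centre. Your route is cleaner and arguably more robust: the paper's final step (``no non-empty word of length at most three satisfies \(\omega=\omega^{x^5}\)'') requires some care for words of length \(2\) and \(3\), since a single group element can be represented by several different words, whereas maximality-plus-simplicity sidesteps canonical forms entirely; what the paper's approach buys in exchange is an explicit normal form for centralizing elements. Two small points you should make explicit: you need \(x^5\neq e\) in \(\mathcal{G}\) (otherwise \(C_{\mathcal{G}}(x^5)=\mathcal{G}\) trivially), and likewise the order-\(10\) computation for \(x\) is a priori a statement about the abstract group \(2^5{:}A_5\) rather than its image in \(\mathcal{G}\); both transfer to \(\mathcal{G}\) because Lemma~\ref{lem:order} forces the control group to embed, so this is easily repaired.
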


\begin{proof}
	By Lemma \ref{lem:order}, the coset stabilizers are not larger than what was computed in the double coset enumeration process. The centralizer of \(x^5\) certainly contains \(2^{1+4}:A_5\). Suppose there is another element \(\pi\omega\) that centralizes \(x^5\) where \(\omega\) is a non-empty word of length at most 3 in the \(t_i\)s. So we have
	\[
		x^5\pi\omega = \pi\omega x^5 = x^5\pi\omega^{x^5},
	\]
	as \(\pi\in 2^{1+4}:A_5\), which implies \(\omega = \omega^{x^5}\). There does not exist non-empty words of length at most three such that this happens as \(x^5\) doesn't stabilize any of the 32 letters of \(X\).
\end{proof}

\bibliographystyle{amsalpha}
\bibliography{j2.bib}

\newcommand{\etalchar}[1]{$^{#1}$}
\providecommand{\bysame}{\leavevmode\hbox to3em{\hrulefill}\thinspace}
\providecommand{\MR}{\relax\ifhmode\unskip\space\fi MR }
\providecommand{\MRhref}[2]{%
  \href{http://www.ams.org/mathscinet-getitem?mr=#1}{#2}
}
\providecommand{\href}[2]{#2}
\begin{thebibliography}{CCN{\etalchar{+}}85}

\bibitem[BCP97]{MAGMA}
Wieb Bosma, John Cannon, and Catherine Playoust, \emph{The {M}agma algebra
  system. {I}. {T}he user language}, J. Symbolic Comput. \textbf{24} (1997),
  no.~3-4, 235--265, Computational algebra and number theory (London, 1993).
  \MR{MR1484478}

\bibitem[CCN{\etalchar{+}}85]{ATLAS}
J.~H. Conway, R.~T. Curtis, S.~P. Norton, R.~A. Parker, and R.~A. Wilson,
  \emph{{$\Bbb{ATLAS}$} of finite groups}, Oxford University Press, Eynsham,
  1985, Maximal subgroups and ordinary characters for simple groups, With
  computational assistance from J. G. Thackray. \MR{827219}

\bibitem[Cur07]{curtis}
Robert~T. Curtis, \emph{Symmetric generation of groups}, Encyclopedia of
  Mathematics and its Applications, vol. 111, Cambridge University Press,
  Cambridge, 2007, With applications to many of the sporadic finite simple
  groups. \MR{2375232}

\bibitem[Fai09]{fairbairn-thesis}
Benjamin Fairbairn, \emph{On the symmetric generation of finite groups}, Ph.D.
  thesis, University of Birmingham, 2009.

\bibitem[HL17]{L-Hasan}
Zahid Hasan and Bronson Lim, \emph{Symmetric generation of {$M_{22}$}}, Comm.
  Algebra \textbf{45} (2017), no.~10, 4257--4274. \MR{3640806}

\bibitem[HW68]{hall-wales-69}
Marshall Hall, Jr. and David Wales, \emph{The simple group of order
  {$604,800$}}, J. Algebra \textbf{9} (1968), 417--450. \MR{240192}

\bibitem[Jan69]{janko}
Zvonimir Janko, \emph{Some new simple groups of finite order. {I}}, Symposia
  {M}athematica ({INDAM}, {R}ome, 1967/68), {V}ol. 1, Academic Press, London,
  1969, pp.~25--64. \MR{0244371}

\bibitem[RM21]{rahimipour-hossein-21}
Ali~Reza Rahimipour and Hossein Moshtagh, \emph{Janko sporadic group {$\rm
  J_2$} as automorphism group of 3-designs}, Discrete Math. \textbf{344}
  (2021), no.~2, 112194, 5. \MR{4165604}

\bibitem[Wie03]{wiedorn}
Corinna Wiedorn, \emph{A symmetric presentation for {$J_1$}}, Comm. Algebra
  \textbf{31} (2003), no.~3, 1329--1357. \MR{1971065}

\bibitem[Wil86]{wilson-86}
Robert~A. Wilson, \emph{The geometry of the {H}all-{J}anko group as a
  quaternionic reflection group}, Geom. Dedicata \textbf{20} (1986), no.~2,
  157--173. \MR{833844}

\end{thebibliography}

\end{document}